\documentclass[11pt,a4paper]{amsart}
\usepackage{mathptmx} 
\usepackage[scaled=0.90]{helvet} 
\usepackage{courier} 
\normalfont
\usepackage[T1]{fontenc}
\usepackage{color}
\usepackage{amssymb}
\usepackage{MnSymbol}
\usepackage{graphicx}
\usepackage{epstopdf}
\DeclareGraphicsRule{.tif}{png}{.png}{`convert #1 `dirname #1`/`basename #1 .tif`.png}
\usepackage{enumitem}
\usepackage{tikz-cd}
\usetikzlibrary{cd}

\usepackage[curve,tips]{xypic}
\SelectTips{eu}{11}
\UseTips

\renewcommand{\epsilon}{\varepsilon}

\newtheorem{theorem}{Theorem}[section]
\newtheorem{proposition}[theorem]{Proposition}
\newtheorem{corollary}[theorem]{Corollary}
\newtheorem{lemma}[theorem]{Lemma}

\newtheorem*{theorema}{Theorem A}
\newtheorem*{theoremb}{Theorem B}

\newtheorem*{theoremb1}{Theorem B1}

\newtheorem*{theoremc}{Theorem C}

\newtheorem*{corollarya}{Corollary A}

\newtheorem*{propositiona}{Proposition}

\theoremstyle{definition}

\newtheorem{definition}[theorem]{Definition}

\theoremstyle{remark}

\newcommand{\iso}{\cong}

\newcommand{\cd}{\operatorname{cd}}
\newcommand{\normal}{\lhd}

\newcommand{\Q}{\mathbb Q}
\newcommand{\Z}{\mathbb Z}

\newcommand{\fpinfty}{{\FP}_{\infty}}

\newcommand{\FP}{\operatorname{FP}}

\renewcommand{\ker}{\operatorname{Ker}}

\renewcommand{\implies}{\Rightarrow}


\renewcommand{\hom}{\operatorname{Hom}}

\newcommand{\End}{\operatorname{End}}

\newcommand{\hd}{\operatorname{hd}}

\newcommand{\aut}{\operatorname{Aut}}

\newcommand{\Maltsev}{{Mal${}'$\!cev}}
\newcommand{\Feldman}{{Fel${}'$\!dman}}

%

\title[Homological Dimension]{Homological Dimension of Elementary Amenable Groups}
\begin{document}

\author[P. H. Kropholler]{Peter H. Kropholler}
\thanks{P.H.K. was supported in part by  EPSRC grants no EP/K032208/1 and EP/N007328/1.}
\address{Mathematical Sciences, University of Southampton,  UK}
\email{p.h.kropholler@soton.ac.uk}

\author[C. Mart\'inez-P\'erez]{Conchita Mart\'inez-P\'erez}
\address{Conchita Martinez-Perez, University of Zaragoza, Spain}
\thanks{C.M-P. was supported by MINECO grant MTM2015-67781-P and by Gobierno de Aragón and European Regional Development Funds.}
\email{conmar@unizar.es}
\date{\today} 

\subjclass[2010]{20J05}

\keywords{solvable groups, homological dimension}

\begin{abstract}
In this paper we prove that the homological dimension of an elementary amenable group over an arbitrary commutative coefficient ring is either infinite or equal to the Hirsch length of the group. Established theory gives simple group theoretical criteria for finiteness of homological dimension and so we can infer complete information about this invariant for elementary amenable groups. Stammbach proved the special case of solvable groups over coefficient fields of characteristic zero in an important paper dating from 1970.
\end{abstract}


\maketitle

\section{Statement of Result}

We calculate the homological dimension of an elementary amenable group relative to an arbitrary coefficient ring. Throughout the paper, \emph{coefficient ring} means any non-zero commutative ring. We write $\hd_k(G)$ for the homological dimension of the group $G$ over the coefficient ring $k$. When it makes sense, we write $h(G)$ for the Hirsch length of $G$. Hillman established the working definition of Hirsch length for elementary amenable groups in \cite{H0}.

\begin{theorema}
Let $G$ be an elementary amenable group and let $k$ be a coefficient ring. If $\hd_k(G)$ is finite then $\hd_k(G)=h(G)$.
\end{theorema}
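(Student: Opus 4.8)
The plan is to prove the two inequalities $\hd_k(G)\ge h(G)$, valid for every elementary amenable group, and $\hd_k(G)\le h(G)$ when $\hd_k(G)<\infty$; the first is the substantial one and generalises Stammbach's theorem to arbitrary coefficient rings and to elementary amenable (not merely solvable) groups. Since group homology commutes with directed unions, both $\hd_k$ and $h$ are suprema over finitely generated subgroups, so one reduces at once to finitely generated $G$. Two standing facts will be used throughout: a finite group $F$ has $\hd_k(F)=\infty$ unless $|F|$ is a unit in $k$; and a locally finite group all of whose finite subgroups have $k$-invertible order is $k$-acyclic ($\hd_k=0$) and, being normal in an overgroup, may be divided out without changing $\hd_k$ (a dimension-shift in the Lyndon--Hochschild--Serre spectral sequence). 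Consequently, whenever $\hd_k(G)<\infty$, every finite subgroup of $G$ has $k$-invertible order and the locally finite radical $\tau(G)$ is $k$-acyclic, so one may replace $G$ by $G/\tau(G)$ and assume $\tau(G)=1$.

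The crux is the identity $\hd_k(M)=h(M)$ for a torsion-free solvable minimax group $M$, which I would prove by induction on $h(M)$. For the inductive step, choose a normal series of $M$ with torsion-free abelian factors of finite rank (available, after passing if necessary to a finite-index subgroup of the same Hirsch length, by the structure theory of solvable minimax groups) and peel off the top factor: $1\to Q\to M\to B\to 1$ with $B$ torsion-free abelian of finite rank $r$. Then $\hd_k(B)=r$ (since $B$ contains a copy of $\Z^r$ while being a directed union of copies of $\Z^r$), and by induction $\hd_k(Q)=h(Q)=h(M)-r$; subadditivity gives $\hd_k(M)\le h(M)$. For the reverse inequality, in the spectral sequence $H_p(B;H_q(Q;-))\Rightarrow H_{p+q}(M;-)$ the term $E^2_{p,q}$ vanishes for $q>h(M)-r$ and for $p>r$, so the corner term $E^2_{r,\,h(M)-r}$ carries no nonzero differentials and survives; hence a coefficient module $N$ with $H_r\bigl(B;H_{h(M)-r}(Q;N)\bigr)\neq 0$ forces $H_{h(M)}(M;N)\neq 0$, i.e. $\hd_k(M)\ge h(M)$. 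The genuinely delicate point --- and the reason an arbitrary coefficient ring is harder than a field of characteristic zero --- is the construction of such an $N$: when $B$ or $Q$ is not finitely generated and is divisible by primes that vanish or are zero-divisors in $k$, trivial and one-dimensional coefficient modules have vanishing top homology, and the homology must instead be detected by modules built from the group ring, such as cyclic quotients $kB/(x-1)kB$.

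To pass from this base case to an arbitrary finitely generated elementary amenable $G$ of finite Hirsch length $n$, assume $\hd_k(G)<\infty$ and reduce to $\tau(G)=1$; by Hillman--Linnell, $G$ is virtually solvable, and a finite-index \emph{normal} solvable subgroup $S$ has $\tau(S)$ characteristic in $S$, hence normal in $G$, hence trivial, so $S$ is a finitely generated solvable minimax group with trivial locally finite radical, and therefore virtually torsion-free. A finite-index torsion-free subgroup $M\le S$ is a torsion-free solvable minimax group of Hirsch length $n$, so $\hd_k(G)\ge\hd_k(M)=n$. Taking suprema over finitely generated subgroups gives $\hd_k(G)\ge h(G)$ for every elementary amenable $G$; in particular $\hd_k(G)<\infty$ forces $h(G)<\infty$, after which the full structure theory applies.

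For the upper bound, suppose $\hd_k(G)<\infty$; then $h(G)=n<\infty$, $G$ has no $k$-singular finite subgroups, and, reducing to $G$ finitely generated and to $\tau(G)=1$, $G$ is virtually torsion-free. Because $G$ has no $p$-torsion for $p$ non-invertible in $k$, lifting prime by prime the relevant Sylow subgroups of a finite torsion-free quotient produces a finite-index torsion-free subgroup $M\trianglelefteq G$ with $[G:M]$ a unit in $k$ (this packaging is essentially the established criterion for finiteness of $\hd_k$). Then $\cd_\Z M=n$, by Serre's theorem together with the fact that finitely generated torsion-free solvable minimax groups have cohomological dimension equal to their Hirsch length (Gildenhuys--Strebel), so $\hd_k(M)\le\cd_k M\le\cd_\Z M=n$; and the transfer, $[G:M]$ being invertible, realises $H_\ast(G;-)$ as a direct summand of $H_\ast(M;-)$, whence $\hd_k(G)\le\hd_k(M)\le n=h(G)$. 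Combined with the lower bound this gives $\hd_k(G)=h(G)$. The pervasive obstacle is that $\hd_k$ is not monotone under quotients: the Hirsch length can only be captured inside a torsion-free subgroup, never read off from a torsion-free or solvable quotient, and it is this --- together with the delicate choice of coefficient modules over an arbitrary ring in the base case --- that constitutes the real work; the rest is spectral-sequence bookkeeping and structure theory of elementary amenable groups of finite Hirsch length.
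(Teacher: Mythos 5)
Your overall skeleton (reduce to finitely generated groups, kill the locally finite radical, pass to a torsion-free virtually solvable minimax group, and prove $\hd_k(M)=h(M)$ by induction with an LHS corner argument) is a reasonable reduction, but it leaves the actual theorem unproved. The inductive step hinges entirely on producing a $kM$-module $N$ with $H_r\bigl(B;H_{h(M)-r}(Q;N)\bigr)\neq 0$, and at precisely this point you write only that the homology ``must instead be detected by modules built from the group ring.'' That construction \emph{is} the theorem: over a field of characteristic zero Stammbach's exterior-power modules do the job, but over an arbitrary coefficient ring no analogous construction is known --- this is exactly \Feldman's unsubstantiated claim discussed in Section 2, and \cite[Lemma I.6]{BK} shows that the naive candidates (trivial or rank-one modules, and more generally the obvious group-ring quotients) genuinely fail in positive characteristic. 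Knowing $\hd_k(Q)=h(Q)$ by induction gives a $kQ$-module with nonvanishing top homology, but there is no argument supplied that it extends or induces to a $kM$-module whose $Q$-homology still has nonvanishing $B$-homology in degree exactly $r$. So the central difficulty is named but not addressed, and the proposal does not constitute a proof. A secondary error: your upper-bound argument asserts that finitely generated torsion-free solvable minimax groups satisfy $\cd_\Z M=h(M)$ ``by Gildenhuys--Strebel''; this is false --- the equality $\cd_\Z=h$ characterises the \emph{constructible} ones (that is the content of the Gildenhuys--Strebel conjecture as proved in \cite{K}), and non-constructible finitely generated examples such as $\Z[1/6]\rtimes_{2/3}\Z$ have $\cd_\Z=h+1$. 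The inequality you actually need, $\hd_k(G)\le h(G)$, is available directly from \cite[Theorem I.2]{BK} (Lemma \ref{bk}), so this is reparable, but as written the step is wrong.

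For contrast, the paper's proof is organised so as to avoid constructing witnessing modules altogether. After the same initial reductions, it replaces the Fitting subgroup by its \Maltsev\ completion, uses the splitting Theorem B (proved via Robinson's vanishing theorem \ref{vanishing}) and the \Maltsev\ cover of Proposition \ref{Maltsevcover} to embed the group, up to the subnormality Lemma \ref{LHS}/Corollary \ref{subnormals}, into a group of the special form $S\ltimes\widehat N$ treated in Section 7; there, integral automorphisms and Theorem C (Bieri--Strebel theory) show the ambient group is locally of type $\fpinfty$, and Proposition \ref{fpinftys} then yields $\hd_k=h$ because finitely generated elementary amenable groups of type $\fpinfty$ are constructible and hence (virtually) inverse duality groups, for which $\hd_k=\cd_k=h$ over every coefficient ring. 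The lower bound is thus obtained from established duality theory for constructible groups rather than from an explicit top-dimensional homology class, which is the essential idea your proposal is missing.
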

 
This answers a question of Bridson and the first author \cite[Conjecture I.1]{BK}. 
Theorem A says that the homological dimension of $G$ is either equal to the Hirsch length or is infinite. Since the below Proposition (which is well known) describes necessary and sufficient conditions for finiteness it follows that we know the homological dimensions of elementary amenable groups.

\begin{propositiona}
Let $G$ be an elementary amenable group and let $k$ be a coefficient ring. Then $\hd_k(G)$ is finite if and only if the following two conditions hold.
\begin{enumerate}
\item
$G$ has no $k$-torsion (meaning that the orders of elements of finite order in $G$ are invertible in $k$).
\item
$h(G)<\infty$.
\end{enumerate}
\end{propositiona}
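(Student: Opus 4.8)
\emph{A plan of proof.} The statement is classical; I would deduce it from two reductions --- to finitely generated subgroups and to the structure theory of soluble groups of finite Hirsch length --- together with a few standard homological facts. Since group homology commutes with directed unions, $\hd_k(G)=\sup\{\hd_k(A):A\leq G\text{ finitely generated}\}$, and by Hillman's treatment of Hirsch length \cite{H0} one likewise has $h(G)=\sup\{h(A):A\leq G\text{ finitely generated}\}$. I will use that $\hd_k$ is monotone under passage to subgroups and that $\hd_k(G)\leq\hd_k(N)+\hd_k(G/N)$ for an extension $1\to N\to G\to G/N\to1$, together with three elementary computations: $\hd_k(B)=r$ for a torsion-free abelian group $B$ of finite rank $r$ (Koszul resolution together with the supremum formula above); $\hd_k(L)=0$ for a locally finite group $L$ all of whose finite subgroups have order invertible in $k$ (Maschke and directed unions); and $\hd_k(\Z/p)=\infty$ when $p$ is not invertible in $k$ (the periodic resolution of $k$ over $k[\Z/p]$ gives $H_{2j}(\Z/p;k)\iso k/pk\neq0$ for all $j$). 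In particular $\hd_k(F)=0$ or $\infty$ for a finite group $F$, according as $|F|$ is or is not invertible in $k$.

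\emph{Necessity.} Suppose $\hd_k(G)<\infty$. If (i) failed then $G$ would contain a copy of $\Z/p$ for some prime $p$ not invertible in $k$, and monotonicity would give $\hd_k(G)\geq\hd_k(\Z/p)=\infty$; so (i) holds. For (ii) I would invoke the classical inequality $h(G)\leq\hd_k(G)$ for elementary amenable groups --- established by Stammbach for soluble groups over fields of characteristic zero, and in general reducible, via the Hillman--Linnell theorem that an elementary amenable group of finite Hirsch length is locally-finite-by-virtually-soluble, to the soluble case, where it follows either from Stammbach's argument or from the \Maltsev--Zassenhaus bound on the torsion-free rank of a soluble group in terms of the torsion-free ranks of its abelian subgroups (each $\Z^r\leq G$ forcing $r\leq\hd_k(G)$). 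Thus $h(G)\leq\hd_k(G)<\infty$, which is (ii).

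\emph{Sufficiency.} Suppose (i) and (ii). By the reductions it is enough to bound $\hd_k(A)$ for finitely generated $A\leq G$; such $A$ inherits (i) and has $h(A)\leq h(G)<\infty$. By Hillman--Linnell, $A$ has a locally finite normal subgroup $T$ with $A/T$ virtually soluble of finite Hirsch length. As $A$ has no $k$-torsion, every finite subgroup of $T$ has invertible order, so $\hd_k(T)=0$ and hence $\hd_k(A)\leq\hd_k(A/T)$; moreover $A/T$ again has no $k$-torsion, since a finite-order element of $A/T$ has a finite-order lift to $A$ (because $T$ is locally finite) whose order it divides. Repeating with the locally finite radical of $A/T$, one reduces to a virtually soluble group $Q$ of finite Hirsch length, without $k$-torsion, whose locally finite radical is trivial. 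The structure theory of such groups provides a torsion-free soluble normal subgroup $N$ of finite index, and a torsion-free soluble group of finite Hirsch length has finite homological dimension over every coefficient ring (classical: part of Stammbach's theorem over fields of characteristic zero, and reducible in general --- via the supremum over finitely generated subgroups --- to the minimax case, where one may use a finite series with torsion-free abelian factors built from isolators, or the results of Bieri and Gildenhuys--Strebel). Finally a Serre-type finite-extension argument, using that $Q$ has no $k$-torsion, carries finiteness of $\hd_k$ from $N$ up to $Q$. Chasing the inequalities gives $\hd_k(A)<\infty$ with a bound depending only on $h(G)$ and $k$, hence $\hd_k(G)<\infty$.

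\emph{Where the difficulty lies.} The homology here is routine; the weight of the argument is carried by the group theory of soluble --- hence, via Hillman--Linnell, elementary amenable --- groups of finite Hirsch length: for necessity the inequality $\hd_k\geq h$ (equivalently the \Maltsev--Zassenhaus rank bound), and for sufficiency both the existence of a finite series with only torsion-free abelian and $k$-trivial locally finite factors and the behaviour of $\hd_k$ under finite extensions. The persistent subtlety is that ``$G$ has no $k$-torsion'' is not inherited by arbitrary subquotients --- the Klein bottle group is torsion-free yet its abelianisation has $2$-torsion --- so crude devices such as the derived series are useless over a coefficient ring of positive characteristic, and one must work through the Hillman--Linnell decomposition and through isolator series whose factors are torsion-free by construction.
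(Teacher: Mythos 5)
Your sufficiency argument is a reasonable outline of the classical direction, but note that the paper does not actually prove this Proposition at all: it is stated as well known and is exactly \cite[Theorem I.2]{BK}, recorded here as Lemma \ref{bk} (which moreover gives the quantitative bounds $h(G)/2\le\hd_k(G)\le h(G)$). So the comparison is against a citation, and the question is whether your sketch would stand on its own. The upper-bound half does, modulo standard references: reduction to finitely generated subgroups, absorption of locally finite normal subgroups using the absence of $k$-torsion, the bound $\hd_k\le h$ for torsion-free soluble groups of finite Hirsch length, and a Serre-type finite-index argument (which, as you rightly flag, must use ``no $k$-torsion'' rather than the Lyndon--Hochschild--Serre inequality, since the index need not be invertible in $k$). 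A trivial slip: for $\Z/p$ with trivial coefficients it is the odd-degree homology that equals $k/pk$, but the conclusion $\hd_k(\Z/p)=\infty$ is unaffected.

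The genuine gap is in necessity (ii). The inequality $h(G)\le\hd_k(G)$ is \emph{not} classical over an arbitrary coefficient ring: it is precisely Conjecture I.1 of \cite{BK}, i.e.\ the content of Theorem A of this paper; what is classical is the weaker bound $h(G)/2\le\hd_k(G)$ of Lemma \ref{bk}, which already yields (ii) and is all the Proposition needs. Your proposed justification does not repair this. First, the reduction to the soluble case via Hillman--Linnell is circular: Lemma \ref{hl} takes $h(G)<\infty$ as a hypothesis, which is exactly what (ii) asserts; in \cite{BK} the reduction is instead carried out by induction over the elementary amenable hierarchy. Second, the parenthetical ``each $\Z^r\le G$ forcing $r\le\hd_k(G)$'' cannot deliver $h(G)\le\hd_k(G)$, because a soluble group of Hirsch length $h$ need not contain a free abelian subgroup of rank $h$: the metabelian group $\Z[1/2]\rtimes\Z$, with the generator acting by multiplication by $2$, has Hirsch length $2$ but no subgroup isomorphic to $\Z\times\Z$, so this method certifies only $h\le 1$ there. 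At best, combined with a genuine \Maltsev--Kargapolov-type theorem (a soluble group whose abelian subgroups have bounded torsion-free rank has finite Hirsch length), your argument gives finiteness of $h(G)$ for soluble groups --- which would suffice for the Proposition --- but you would still need the non-circular passage from elementary amenable to soluble. The honest fix is either to quote \cite[Theorem I.2]{BK} for this direction, as the paper does, or to reproduce its induction over the hierarchy; appealing to $h\le\hd_k$ as known assumes the theorem the paper is written to prove.
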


As observed by \Feldman\ \cite{Feldman} one can draw the following conclusion for cohomological dimension.

\begin{corollarya}
If $G$ is a countable elementary amenable group with no $k$-torsion and with finite Hirsch length then $h(G)\le\cd_k(G)\le h(G)+1$.
\end{corollarya}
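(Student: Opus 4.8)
The plan is to deduce Corollary A from Theorem A together with the Proposition and a standard comparison between homological and cohomological dimension for countable groups. First I would observe that the hypotheses — $G$ countable elementary amenable, no $k$-torsion, $h(G)<\infty$ — are precisely the conditions in the Proposition guaranteeing that $\hd_k(G)$ is finite, so Theorem A applies and gives $\hd_k(G)=h(G)$. Thus the task reduces to showing, for a countable group $G$ of finite homological dimension over $k$, that $\hd_k(G)\le\cd_k(G)\le\hd_k(G)+1$.

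The left-hand inequality $\hd_k(G)\le\cd_k(G)$ is completely general: a projective resolution of $k$ over $kG$ of length $\cd_k(G)$ is in particular a flat resolution, so it computes $\hd_k(G)$. For the right-hand inequality I would invoke the result of \Feldman\ (this is exactly the observation attributed to him in the excerpt): for a \emph{countable} group $G$, one has $\cd_k(G)\le\hd_k(G)+1$. The mechanism behind this is the standard one — take a flat resolution of $k$ over $kG$ realising the homological dimension $n=\hd_k(G)$; because $G$ is countable one may arrange this resolution to consist of countably generated flat modules, and by a theorem in the style of Bieri (building on Auslander's characterisation of projective dimension and the fact that countably generated flat modules have projective dimension at most $1$) the $n$-th syzygy, which is flat, has projective dimension at most $1$. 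Splicing then yields a projective resolution of length $n+1$, whence $\cd_k(G)\le n+1 = \hd_k(G)+1$.

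Putting the two inequalities together with $\hd_k(G)=h(G)$ gives $h(G)\le\cd_k(G)\le h(G)+1$, as required. The only genuinely substantive input beyond Theorem A is the countable-group bound $\cd_k(G)\le\hd_k(G)+1$; since the excerpt explicitly credits this to \Feldman, I would simply cite it rather than reprove it, so there is no real obstacle here — the corollary is a formal consequence. If one did wish to be self-contained, the point requiring the most care would be checking that the finiteness of $\hd_k(G)$ (rather than merely $\cd_k(G)$) is enough to start the syzygy argument, i.e. that one can take a flat resolution of \emph{finite} length by countably generated modules; this is where countability of $G$ is used, via the fact that $kG$ itself is countable and hence submodules of countably generated $kG$-modules behave well enough to carry out the dimension-shifting.
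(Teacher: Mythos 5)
Your proof is correct and follows essentially the same route as the paper: the paper likewise invokes the fact that over any ring a countably generated flat module has projective dimension at most one, deduces $\hd_k(G)\le\cd_k(G)\le\hd_k(G)+1$ for a countable group with $\hd_k(G)<\infty$, and combines this with $\hd_k(G)=h(G)$ from Theorem A and the Proposition (adding only a citation of Hillman for the independent inequality $h(G)\le\cd_\Z(G)$). The one slip, confined to your optional self-contained aside, is the remark that $kG$ is countable---$k$ is an arbitrary commutative ring---but countability of $G$ alone suffices, since $k$ is a cyclic $kG$-module and, say, the bar resolution has countably generated free terms, so the relevant flat syzygy is countably generated (indeed countably presented) and the argument stands.
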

\begin{proof} Over any ring, a countably generated flat module has projective dimension at most one. From this, one can deduce that
the inequalities
 $\hd_k(G)\le\cd_k(G)\le\hd_k(G)+1$ hold for any countable group $G$ with $\hd_k(G)<\infty$. Note that the inequality $h(G)\le\cd_\Z(G)$ is proved in \cite[Lemma 1.10 and Theorem 1.11]{Hillman}, building on the analysis of Hillman and Linnell \cite{HL}.
\end{proof}

\subsection*{Subsidiary Results}

Along the way, we have found the need of two subsidiary results which may be of independent interest. The first is a technical splitting theorem which is essentially a refinement of a splitting theorem of Lennox--Robinson and, independently, Zaicev \cite[10.3.2]{LennoxRobinson}. Moreover its proof uses very similar tools developed by Robinson and concerning the vanishing of certain cohomology groups for nilpotent groups (see \cite[10.3.2]{LennoxRobinson}).

\begin{theoremb}
Suppose that $Q$ is a group with subgroups $L$, $M$, $P$ so that the following conditions hold:
\begin{enumerate}
\item
$M$ and $P$ are normal in $Q$.
\item
$LM\subseteq P$.
\item
$M$ is a \Maltsev\ complete nilpotent group of finite Hirsch length.
\item\begin{itemize}
\item either $L=1$ and $P/M$ is nilpotent,
\item or $LM=P$ and $L$ is nilpotent.
\end{itemize}
\end{enumerate}
Then $Q$ has a subgroup $Q_0$ such that 
$L\subseteq	Q_0$,
$P\cap Q_0$ is nilpotent, and $Q_0M=Q$. If moreover $Q/M$ and $L$ are finitely generated, then $Q_0$ can be taken to be finitely generated too.
\end{theoremb}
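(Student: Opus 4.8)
The plan is to imitate the proof of the Lennox--Robinson--Zaicev splitting theorem \cite[10.3.2]{LennoxRobinson}, but instead of splitting $M$ off $Q$ directly, I will split off a carefully chosen $Q$-invariant subgroup $M_0\le M$ from $P$ and then take $Q_0$ to be the normaliser in $Q$ of a complement. The engine throughout is Robinson's vanishing of the cohomology of nilpotent groups with radicable coefficients of finite rank.

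\emph{Step 1: the ``perfect part'' $M_0$ of $M$.} Since $M$ is \Maltsev\ complete of finite Hirsch length, the terms of its lower central series are $Q$-invariant and \Maltsev\ complete, and the factors $\gamma_i(M)/\gamma_{i+1}(M)$ are finite-dimensional $\Q$-vector spaces on which $M$ acts trivially, hence on which $P/M$ acts -- a nilpotent group in both alternatives of (iv); when $LM=P$ this action coincides with the one induced by $L$. In each such factor I split off the largest submodule on which the augmentation ideal $\omega$ of $\Q[P/M]$ acts surjectively, its $P/M$-perfect part. This submodule is $Q$-invariant (conjugation by $Q$ permutes $P/M$-perfect submodules, as $P\normal Q$), the quotient is a module on which $\omega$ acts nilpotently, i.e. on which $P/M$ acts unipotently, and a Nakayama/Artinian-ring argument over the finite-dimensional image of $\Q[P/M]$ in $\End$ of the factor shows that a module $A$ with $\omega A=A$ is killed by the non-augmentation idempotents, hence has no nonzero invariants, and likewise for all its quotients. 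Assembling the perfect parts of the successive factors (a routine induction up the central series) I obtain a $Q$-invariant \Maltsev\ complete subgroup $M_0\le M$ such that $P/M$ acts unipotently on every central factor of $M/M_0$ -- so that $P/M_0$ is nilpotent, being an extension of a nilpotent group by the nilpotent group $P/M$ with nilpotent action -- and such that $M_0$ has no nonzero $P/M_0$-trivial abelian section (the conjugation action of $M/M_0$ on $M_0$ is unipotent and so contributes no trivial section beyond those, none, coming from $P/M$).

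\emph{Step 2: complementing $M_0$ in $P$, $L$-invariantly.} Because $P/M_0$ is nilpotent, $M_0$ is radicable of finite rank, and $M_0$ has no $P/M_0$-trivial abelian section, Robinson's vanishing theorems -- together with their standard consequences for non-abelian coefficients, obtained by d\'evissage up the central series of $M_0$ -- give that every extension of $P/M_0$ by $M_0$ splits, all complements of $M_0$ in $P$ are conjugate under $M_0$, and the $M_0$-centraliser of any complement is trivial; hence the set $X$ of complements of $M_0$ in $P$ is a principal homogeneous space under conjugation by $M_0$. Now $L$ normalises $M_0$ (as $M_0\normal Q$), so acts on $X$ compatibly with the $M_0$-action; the obstruction to an $L$-fixed point lies in $H^1(L,M_0)$, and this vanishes: when $LM=P$, $M_0$ has no $L$-trivial abelian section (the $L$-action factors, modulo the unipotent action of $L\cap M$, through $P/M$, for which there are no trivial sections), so Robinson's theorem applies again, while when $L=1$ there is nothing to prove. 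Thus there is an $L$-invariant complement $C_0\in X$, and $C_0\iso P/M_0$ is nilpotent.

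\emph{Step 3: $Q_0:=N_Q(C_0)$, and finite generation.} Since $M_0\normal Q$ and $P\normal Q$, the group $Q$ acts on $X$ by conjugation and already $M_0$ acts transitively, so $Q=M_0\,N_Q(C_0)$. Put $Q_0:=N_Q(C_0)$. Then $Q_0M\supseteq Q_0M_0=Q$, so $Q_0M=Q$; $Q_0\cap M_0=N_{M_0}(C_0)=1$; $P\cap Q_0=N_P(C_0)=C_0\,(N_P(C_0)\cap M_0)=C_0$ by the modular law and triviality of the centraliser, so $P\cap Q_0=C_0$ is nilpotent; and $L\le N_Q(C_0)=Q_0$ by the choice of $C_0$. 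This gives the main statement. If in addition $Q/M$ and $L$ are finitely generated, replace $Q_0$ by $Q_0':=\langle L,g_1,\dots,g_n\rangle$, where the $g_i\in Q_0$ project onto a finite generating set of $Q/M$: then $Q_0'$ is finitely generated, contains $L$, satisfies $Q_0'M=Q$, and $P\cap Q_0'\le P\cap Q_0=C_0$ is nilpotent.

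The step I expect to be the main obstacle is Step 2: having Robinson's cohomological vanishing available in exactly the form required -- nilpotent groups of arbitrary rank acting on radicable modules of finite rank with no trivial section, and the passage to non-abelian coefficients $M_0$ (splitting, conjugacy of complements, triviality of centralisers). Constructing $M_0$ and checking that ``no trivial section'' survives the extra unipotent conjugation action in Step 1 is the other place needing care; Step 3 is then purely formal.
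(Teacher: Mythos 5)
There is a genuine gap at Step 1: the subgroup $M_0$ you posit need not exist, because the ``perfect parts'' of the successive lower central factors do not assemble into a subgroup --- commutators of elements lying over the perfect part of one layer can land in the unipotent part of a deeper layer. Concretely, let $M$ be the \Maltsev\ complete Heisenberg group generated by $x,y$ with $z=[x,y]$ central, and take $Q=P=M\rtimes\langle t\rangle$, $L=1$, where $t$ acts on $M_{\textrm{ab}}$ diagonally with eigenvalues $2$ and $1/2$ (so $x\mapsto x^2$, $y\mapsto$ a square root of $y$) and hence trivially on $\gamma_2(M)=Z(M)$. The hypotheses of Theorem B hold. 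If $P/M_0$ is to be nilpotent, then $t$ must act unipotently on $(M/M_0)_{\textrm{ab}}$; since the $t$-action on $M_{\textrm{ab}}$ has no eigenvalue $1$, this forces the image of $M_0$ in $M_{\textrm{ab}}$ to be everything, i.e.\ $M_0\gamma_2(M)=M$, and hence $M_0=M$ (no proper subgroup of a nilpotent group supplements $\gamma_2$). But then $M_0$ contains the $P$-trivial abelian section $Z(M)$, so your requirement that $M_0$ have no nonzero $P/M_0$-trivial abelian section cannot be met. The downstream claims of Step 2 then fail for the only available $M_0$: in this example $\langle t\rangle$ and $\langle tz\rangle$ are complements of $M$ in $P$ that are not conjugate under $M$ (a computation with $a\mapsto a^{-1}a^{t}$ shows $z$ is not in its image), and the $M$-centraliser of $\langle t\rangle$ contains $Z(M)$. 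Since Step 3 rests precisely on transitivity of the $M_0$-action on complements (to obtain $Q=M_0N_Q(C_0)$) and on triviality of centralisers (to obtain $P\cap Q_0=C_0$), the argument does not go through.

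The paper's proof avoids exactly this interleaving problem by inducting on $h(M)$ rather than decomposing $M$ in one shot: one chooses a $Q$-irreducible divisible subgroup $K$ in the centre of $M$, applies the inductive hypothesis to $Q/K$ to get $Q_1$ with $P_1=P\cap Q_1$, and then splits into cases. If $K^{P}=K$, the central $K$ is simply absorbed into the nilpotent group $P_1$ and no splitting over $K$ is needed at that stage; if $K^{P}=0$, Robinson's vanishing theorem is applied with the \emph{abelian} coefficient module $K$ to get $H^2(Q_1/K,K)=0$ (existence of a complement $Q_0$ to $K$ in $Q_1$) and $H^1(P_1/K,K)=0$ (conjugacy of complements, used to conjugate $Q_0$ so that $L\subseteq Q_0$). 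The layer-by-layer case analysis is what handles configurations like the Heisenberg example, where trivial and nontrivial $P$-layers are inseparably intertwined; to repair your argument you would need to replace the global decomposition $M_0$ by some such induction.
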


The second is a consequence of the Bieri--Strebel theory of solvable groups. 
For the statement we need to introduce two items of terminology.
A group is called \emph{locally of type $\fpinfty$} meaning that every finite subset of the group is contained in a subgroup of type $\fpinfty$. As we shall see later, an elementary amenable group which is locally of type $\fpinfty$ and which has Hirsch length $n<\infty$ always has the property that all finitely generated subgroups of Hirsch length $n$ are of type $\fpinfty$ and this is a reason why the property has prominent role. Also, the homological dimension of solvable groups of type $\fpinfty$ is well understood, and this is a second reason why the property is useful to us. Further,
we say that an endomorphism $\alpha$ of an abelian group is \emph{integral} if there are a positive integer $m$ and integers $b_0,\dots,b_{m-1}$ such that
 $$\alpha^m+b_{m-1}\alpha^{m-1}+\dots+b_1\alpha+b_0=0.$$

\begin{theoremc}
Let $G$ be a group with a nilpotent normal subgroup $N$ such that $h(N)<\infty$ and $G/N$ is finitely generated and abelian. Assume further that there is a finite subset $X$ of $G$ such that the following two properties hold.
\begin{enumerate}
\item
$G=\langle N\cup X\rangle$.
\item
Each automorphism of $N_{\textrm{ab}}$ that is induced by conjugation by an element of $X$ is integral.
\end{enumerate}
Then $G$ is locally of type $\fpinfty$.
\end{theoremc}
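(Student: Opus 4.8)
The plan is to reduce the assertion to the case in which $G$ itself is finitely generated, and then to prove that such a $G$ is \emph{constructible} in the sense of Baumslag and Bieri, i.e.\ obtained from the trivial group by finitely many finite extensions and ascending HNN extensions; since constructible groups are of type $\fpinfty$, this suffices. The reduction to the finitely generated case is routine: every finite subset of $G$ lies in a subgroup $G_0=\langle Y\cup X\rangle$ with $Y\subseteq N$ finite, and such a $G_0$ again satisfies the hypotheses — the only point worth a remark is that conjugation by $x\in X$ still acts integrally on $(G_0\cap N)_{\textrm{ab}}$, which follows from the elementary observation that the class of integral endomorphisms of abelian groups is closed under passage to subgroups, quotients, extensions (by multiplicativity of the witnessing polynomials) and tensor products (a commutative ring generated by finitely many integral elements is a finitely generated abelian group, hence integral over $\mathbb{Z}$).

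So assume $G$ is finitely generated. The first step is to push the integrality hypothesis up the lower central series of $N$: the iterated commutator pairings give $\mathbb{Z}[G/N]$-module epimorphisms $N_{\textrm{ab}}\otimes_{\mathbb{Z}}\bigl(\gamma_{i-1}(N)/\gamma_i(N)\bigr)\twoheadrightarrow\gamma_i(N)/\gamma_{i+1}(N)$, so that each conjugation automorphism induced by an element of $X$ acts integrally on every lower central quotient of $N$, and hence on every subquotient of every such quotient — which is what is actually used. Now one induces on the Hirsch length $h(G)=h(N)+h(G/N)$. If $G/N$ is finite, then $N$ has finite index in the finitely generated group $G$, so $N$ is finitely generated nilpotent, hence polycyclic; thus $G$ is polycyclic-by-finite and therefore constructible (in particular of type $\fpinfty$). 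This is the base of the induction.

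If $G/N$ is infinite, the heart of the argument is to display $G$ as an ascending HNN extension $G=H\ast_{\phi}$, with stable letter $\tilde t$ acting by conjugation, in which $H$ is a finitely generated subgroup, $H\cap N$ is nilpotent of finite Hirsch length, $H/(H\cap N)$ is finitely generated abelian of Hirsch length $h(G/N)-1$, $H$ again satisfies the hypotheses of the theorem, and $h(H)<h(G)$; the inductive hypothesis then makes $H$ constructible, and an ascending HNN extension of a constructible group is constructible. The relevant input from the Bieri--Strebel theory of the geometric invariant is that a finitely generated $\mathbb{Z}[Q]$-module $M$ (here $Q=G/N$) on which the chosen generators act integrally is finitely generated over the monoid ring $\mathbb{Z}[Q_\chi]$ for every character $\chi$ lying in a suitable nonempty rational cone: concretely, one chooses a rational surjection $\pi\colon Q\twoheadrightarrow\mathbb{Z}$ with $Q=Q_1\times\langle t\rangle$ and arranges that $N_{\textrm{ab}}$ and every quotient $\gamma_i(N)/\gamma_{i+1}(N)$ is finitely generated over $\mathbb{Z}[Q_1][t^{-1}]$; propagating this along the lower central series produces an $\langle X_1\rangle$-invariant subgroup $N_0\le N$ (with $X_1$ a finite set of lifts of generators of $Q_1$) satisfying $\tilde t N_0\tilde t^{-1}\subseteq N_0$ and $\bigcup_k\tilde t^{-k}N_0\tilde t^{k}=N$, and one then takes $H=\langle N_0\cup X_1\rangle$.

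I expect the construction of $H$ to be the main obstacle. In the metabelian case $N=N_{\textrm{ab}}$ it is essentially a direct computation with the monic polynomial that witnesses integrality (one truncates positive powers of $t$), and this is the transparent prototype; the general nilpotent case requires doing this simultaneously and compatibly for all the lower central quotients of $N$. In addition one must check that the surjection $\pi$ can be chosen so that the lifts $X_1$ of a generating set of $Q_1$ still act integrally on $(H\cap N)_{\textrm{ab}}$, so that the inductive hypothesis genuinely applies; and the torsion in $G/N$, as well as the degenerate case in which the cone of admissible characters collapses — which one shows forces $N$ itself to be finitely generated, returning one to the polycyclic situation — have to be handled separately. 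These are exactly the points where the Bieri--Strebel analysis of finiteness properties of nilpotent-by-abelian groups does the real work.
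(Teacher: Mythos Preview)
Your overall strategy is sound, but it takes a substantially longer route than the paper does, and in doing so you are essentially re-proving the implication ``$\Sigma^c$ in an open half-space $\Rightarrow$ type $\fpinfty$'' from Bieri--Strebel's 1982 paper rather than citing it.

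The paper's proof is a short paragraph. Given a finite subset $Y\subseteq G$, set $H=\langle X\cup Y\rangle$; then $H\cap N$ is nilpotent of finite Hirsch length, $H/(H\cap N)$ is finitely generated abelian, and by Lemma~6.4 each $x\in X$ still acts integrally on $H\cap N$. One now invokes two black boxes: \cite[Theorem~4.6]{bieristrebel1981crelle} says that integrality of the generators forces $\Sigma^c_{(H\cap N)_{\textrm{ab}}}$ to lie in the open hemisphere $\{[v]:v(q)>0\}$, where $q$ is the product of the images in $H/(H\cap N)$ of a maximal linearly independent subset of $X$; and \cite[Theorem~A(iii)]{BS} says that this hemisphere condition is equivalent to $H$ being of type $\fpinfty$. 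That is the whole argument.

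Your plan differs in that you unpack the second black box: you build the ascending HNN structure explicitly and induct on Hirsch length, thereby proving the stronger statement that a finitely generated $G$ satisfying the hypotheses is constructible. This is perfectly legitimate --- it is essentially how \cite{BS} proves Theorem~A --- but two of the points you flag as ``to be handled'' are genuinely delicate. First, the inductive hypothesis requires a finite set $X_1$ of generators of $H$ modulo $H\cap N$ acting integrally on $(H\cap N)_{\textrm{ab}}$; since integral automorphisms do not form a group, you must choose the splitting $Q=Q_1\times\langle t\rangle$ so that $X_1$ can be taken inside the original $X$ (choosing $\pi$ to vanish on all but one element of a maximal independent subset $X_0\subseteq X$ does this). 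Second, with $H=\langle N_0\cup X_1\rangle$ you need $\tilde t H\tilde t^{-1}\subseteq H$, but $\tilde t x_i\tilde t^{-1}=x_i\cdot n$ with $n\in N$ not obviously in $N_0$; one has to enlarge $N_0$ to absorb these finitely many commutators while keeping it $\langle X_1,\tilde t\rangle$-ascending. Both fixes are standard in the Bieri--Strebel machinery, but they are exactly why the paper prefers to cite \cite[Theorem~A(iii)]{BS} rather than redo it.
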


\subsection*{Organization of the paper}

In the next section we include some background material about solvable groups of finite rank, Hirsch lengths, constructible groups, and inverse duality groups.
Material on nilpotent groups and their \Maltsev\ completions is contained in Section 3. Some reductions for Theorem A are made in Section 4. Of special note, Lemma \ref{LHS} implies that if $G$ is an elementary amenable group with $\hd_k(G)=h(G)<\infty$ then for any subnormal subgroup $H$ of $G$ we also have $\hd_k(H)=h(H)<\infty$. Much of the technical drive in this paper concerns embedding groups as subnormal subgroups of nicer groups so that this Lemma can be used.

Theorem B is proved in Section 5 using classical cohomological vanishing. 
Theorem C is explained in Section 6, and Theorem A is proved in a special case in Section 7. The proof of Theorem A is completed using some further applications of Theorem B in Section 8. In broad outline the idea is to embed the original group, by using Theorem B, into a group satisfying the hypotheses of Theorem C. Then we can use established theory of solvable groups of type $\fpinfty$ to prove Theorem A.

\subsection{Acknowledgement}
We are indebted to an anonymous referee for noticing a gap in our original proof of Theorem A and for showing us in detail that the argument could be corrected by using the notion of \emph{integral endomorphism}. Thanks to the referee, this concept now plays a central role in the proof of the Theorem. In addition, we are grateful to him/her for many improvements to the exposition throughout the paper.
{\iffalse a number of important improvements to the original draft of this paper. In particular, the referee proposed the formulation of Theorem C that we now include. The use of the concept or integral endomorphisms of abelian groups makes this theorem both more elegant in statement and more easily deduced from the Bieri--Strebel theory. The referee has also pointed out a number of places where are arguments can be simplified or improved. And finally we thank the referee for pointing out a gap in our original arguments at the end of section 7 as well as providing detailed proposals for how to fill this gap. All these points have been taken into account in the present work. }\fi

\section{Background Material and Historical Remarks}

Recall that the class of \emph{elementary amenable groups} is the smallest class of groups containing all finite and all abelian groups, that is also closed under group extensions and directed unions.

\subsection*{Hirsch Length and Homological Dimension} 
The connection between Hirsch length and homological dimension of solvable groups was established by Stammbach who proved that $\hd_K(G)=h(G)$ whenever $G$ is solvable and $K$ is a field of characteristic zero:
his elegant calculation uses exterior powers of abelian groups \cite{Stammbach}. This work, published in 1970, was quickly followed by important work of \Feldman, and amongst other things \Feldman\ makes a claim that Stammbach's characteristic zero calculation can be extended to positive characteristic. However, this claim cannot be substantiated in the way that \Feldman\ proposes. Bieri \cite{BieriQMC} gives a detailed account of Stammbach's result but makes no comment how a calculation of homological dimension of solvable groups in positive characteristic might proceed. In fact, modules witnessing homological dimension in positive characteristic are significantly more complicated than those used by Stammbach. Complication of some kind is unavoidable in the light of \cite[Lemma I.6]{BK}.

\subsection*{The Hillman--Linnell Theorem}

In the solvable case, the following result combines work of \Maltsev, Gruenberg and Robinson. We refer the reader to \cite[\S5.2]{LennoxRobinson} for commentary and proof. 
Hillman and Linnell  \cite{HL} extended this to the larger class of elementary amenable groups, and for this general case we refer the reader to \cite[Theorem 1.9]{Hillman}. Wehrfritz has given an alternative short and explicit account of this result in \cite{BAFW}.

\begin{lemma}\label{hl}
Let $G$ be an elementary amenable group. If $h(G)$ is finite then $G$ has characteristic subgroups $T\subset N\subset H\subset G$ such that the following hold. 
\begin{enumerate}
\item
$T$ is the unique largest normal locally finite subgroup of $G$
\item
$N/T$ is the Fitting subgroup of $G/T$ and it is torsion-free and nilpotent
\item
$H/N$ is a finite rank free abelian group
\item
$G/H$ is finite.
\end{enumerate}
The above conditions uniquely determine the subgroups $T$ and $N$. However, if $G/N$ has a non-trivial finite normal subgroup, there is not necessarily a natural condition specifying $H$ uniquely. \qed
\end{lemma}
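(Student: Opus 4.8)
The plan is to peel off the three characteristic layers $T$, $N/T$, $H/N$ in turn, invoking at each stage a structure theorem valid for a successively smaller class of groups; only the last step is really substantial. First I would construct $T$: in an arbitrary group the normal locally finite subgroups are closed under products (an extension of a locally finite group by a locally finite group is locally finite) and under directed unions, so a Zorn's lemma argument gives a unique largest one, necessarily characteristic, and this is $T$, so (i) holds. Hirsch length vanishes on locally finite groups and is additive on extensions, so $h(G/T)=h(G)<\infty$, and replacing $G$ by $G/T$ I may assume $T=1$; it then remains to produce a characteristic torsion-free nilpotent subgroup $N=\mathrm{Fitt}(G)$ and a characteristic subgroup $H\supseteq N$ with $H/N$ free abelian of finite rank and $G/H$ finite.

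Next I would reduce to the solvable case. By the Hillman--Linnell theorem \cite{HL} (see \cite[Theorem 1.9]{Hillman}) an elementary amenable group of finite Hirsch length is (locally finite)-by-(virtually solvable) with the virtually solvable quotient of finite Pr\"ufer rank, so with $T=1$ the group $G$ is itself virtually solvable of finite rank, and hence has a characteristic solvable subgroup $S$ of finite index (for instance the product of the finitely many finite-index normal solvable subgroups, each a core of a finite-index solvable subgroup). Now I invoke the classical theory of solvable groups of finite rank due to \Maltsev, Gruenberg and Robinson \cite[\S5.2]{LennoxRobinson}: by Robinson's theorem $N:=\mathrm{Fitt}(S)$, the product of all nilpotent normal subgroups of $S$, is nilpotent; it is characteristic in $S$ and $S$ is characteristic in $G$, so $N$ is characteristic in $G$, and since any nilpotent normal subgroup of $G$ is solvable it lies in $S$, hence in $N$, so in fact $N=\mathrm{Fitt}(G)$. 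The torsion subgroup of $N$ is characteristic in $N$, hence normal in $G$, and is locally finite (a torsion nilpotent group is locally finite), hence trivial, so $N$ is torsion-free, giving (ii).

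The bridge to the last layer is the self-centralising property $C_G(N)\le N$. I would prove it as follows. For any solvable group $S$ of finite rank one has $C_S(\mathrm{Fitt}(S))\le\mathrm{Fitt}(S)$: otherwise, descending the derived series inside the centraliser produces a nilpotent subgroup that is normal in $S$, lies outside $\mathrm{Fitt}(S)$, and commutes with it, and whose product with $\mathrm{Fitt}(S)$ is then a strictly larger nilpotent normal subgroup, a contradiction. Applying this in our $S$ gives $C_G(N)\cap S=Z(N)$, so $C_G(N)$ is central-by-finite; by Schur's theorem its commutator subgroup is finite, hence trivial as $T=1$, so $C_G(N)$ is abelian, and therefore $C_G(N)N$ is a nilpotent normal subgroup of $G$, forcing $C_G(N)\le\mathrm{Fitt}(G)=N$.

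Finally I would analyse $Q:=G/N$. Conjugation embeds $G/Z(N)=G/C_G(N)$ in $\aut(N)$, and since $N$ is torsion-free nilpotent of finite rank its \Maltsev\ completion $N^{\Q}$ is a nilpotent rational Lie group with $\aut(N)\hookrightarrow\aut(N^{\Q})$, a linear algebraic group over $\Q$; because inner automorphisms of $N$ act unipotently on $N^{\Q}$ while $N=\mathrm{Fitt}(G)$ absorbs every normal subgroup of $G$ acting unipotently, $Q$ in fact embeds in the reductive quotient of $\aut(N^{\Q})$. Thus $Q$ is a virtually solvable subgroup of finite Pr\"ufer rank of a reductive $\Q$-group; by \Maltsev's theorem on solvable linear groups it is virtually contained in a maximal torus, and a subgroup of finite rank of $(\bar{\Q}^{\times})^{n}$ is virtually free abelian of finite rank, so the preimage $H$ in $G$ of a free abelian subgroup of finite index in $Q$ is as required, which is (iii) and (iv). The hard part is this last step: $T$ and $N$ are pinned down by their descriptions, but controlling $Q=G/N$ (bounding its Pr\"ufer rank through the linearisation of $\aut(N)$, and then choosing a free abelian subgroup of finite index in $Q$, which when $Q$ has torsion can be done in more than one way) is the real content, and that last ambiguity is precisely why $H$ need not be unique.
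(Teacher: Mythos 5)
You should first note that the paper itself gives no proof of this lemma: it is quoted with references to Lennox--Robinson \cite[\S5.2]{LennoxRobinson} for the solvable case and to Hillman--Linnell \cite{HL}, \cite[Theorem 1.9]{Hillman} and Wehrfritz \cite{BAFW} for the elementary amenable case. Your outline reconstructs exactly that classical route (locally finite radical $T$; reduction via \cite{HL} to a virtually solvable group of finite rank; $N=$ Fitting subgroup, nilpotent, torsion-free and self-centralizing; linearization of $G/Z(N)$ in $\aut(N^\Q)$), and up through the self-centralizing step the argument is essentially sound, modulo two slips: a virtually solvable group may have infinitely many finite-index normal solvable subgroups, so to get a characteristic solvable $S$ of finite index you should instead observe that normal solvable subgroups have bounded derived length and take the solvable radical; and the ``finite Pr\"ufer rank'' input you attribute to \cite{HL} is really the content of \cite[Theorem 1.9]{Hillman}, i.e.\ of the lemma itself, so you must be careful to quote only the weaker ``locally finite by virtually solvable of finite abelian section rank'' statement to avoid circularity.

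The genuine gap is in your last paragraph, which is where the real content of (iii) lies. First, the assertion that ``a subgroup of finite rank of $(\overline{\Q}^{\times})^{n}$ is virtually free abelian of finite rank'' is false: the group of $p$-power roots of unity, or the multiplicative group $\{2^{a/p^{k}}:a\in\Z,\ k\ge 0\}\iso\Z[1/p]$, are rank-one subgroups of $\overline{\Q}^{\times}$ that are not virtually free abelian. The freeness of $H/N$ is precisely the delicate point, and it comes from the fact that the eigenvalue characters of elements of $G/N$ take values in number fields of degree bounded by the dimension of the relevant $\Q$-module, together with the fact that the multiplicative group of a number field is (finite cyclic)$\times$(free abelian); without this one only obtains ``abelian-by-finite of finite rank'', which is strictly weaker than (iii). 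Second, the reductive-quotient/maximal-torus step is garbled: killing the unipotent radical of $\aut(N^\Q)$ does not make the image of $Q$ diagonalizable, and a solvable subgroup of a reductive group need not be virtually contained in a maximal torus (Borel subgroups are the obvious counterexample); Mal${}'$cev/Lie--Kolchin only gives virtual triangularizability over $\overline{\Q}$, and one must then show that the preimage in $G$ of the unipotently acting part is nilpotent (it stabilizes a central series of $N$ and has small centralizer, so it lies in $\mathrm{Fitt}(G)=N$) before applying the eigenvalue characters to what remains. Third, the lemma requires $H$ to be \emph{characteristic}, whereas your $H$ is the preimage of an arbitrarily chosen free abelian finite-index subgroup of $Q=G/N$; this is fixable (once $Q$ is known to be finitely generated and virtually free abelian, the intersection of all subgroups of the relevant index is a characteristic free abelian subgroup of finite index, and its preimage is characteristic in $G$ since $N$ is), but as written the statement proved is weaker than the one claimed.
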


Another useful way of understanding Hirsch length is as follows.

\begin{lemma}\label{2.2}
Let $G$ be an elementary amenable group. Then $G$ has finite Hirsch length if and only if there is a series $1=G_0\normal G_1\normal G_2\normal\dots\normal G_n=G$ in which the factors are either cyclic or locally finite. Moreover, when these conditions hold then $G$ has finitely generated subgroups with the same Hirsch length.
\end{lemma}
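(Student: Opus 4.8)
The plan is to treat the two implications separately. The reverse implication is immediate: if $1=G_0\normal G_1\normal\dots\normal G_n=G$ has every factor cyclic or locally finite, then by additivity of Hirsch length over group extensions of elementary amenable groups \cite{H0} we have $h(G)=\sum_{i=1}^n h(G_i/G_{i-1})\le n$, because a cyclic group has Hirsch length $0$ or $1$ and a locally finite group has Hirsch length $0$.

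For the forward implication the engine is the following claim, proved by induction on $h(M)$: \emph{every nilpotent group $M$ of finite Hirsch length has a subnormal series with all factors cyclic or locally finite}. If $h(M)=0$ then $M$ has no element of infinite order (such an element would generate an infinite cyclic subgroup, forcing $h(M)\ge 1$ by monotonicity), so $M$ is torsion; being nilpotent and torsion it is locally finite, since its finitely generated subgroups are finitely generated torsion nilpotent groups, hence finite, and thus $1\normal M$ is the required series. If $h(M)\ge 1$, let $\tau$ be the torsion subgroup of $M$, which is a characteristic subgroup because $M$ is nilpotent; it is locally finite, and $\bar M:=M/\tau$ is torsion-free nilpotent with $h(\bar M)=h(M)\ge 1$. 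Hence $\bar M\ne 1$, so its centre is nontrivial and torsion-free, and therefore contains an element $z$ of infinite order; the central subgroup $C=\langle z\rangle\cong\Z$ is normal in $\bar M$ and $h(\bar M/C)=h(\bar M)-1$. By induction $\bar M/C$ has a subnormal series with cyclic or locally finite factors; taking preimages in $\bar M$ and prepending $1\normal C$ yields such a series for $\bar M$, and taking preimages in $M$ of the latter and prepending $1\normal\tau$ yields the required series for $M$.

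To complete the forward implication, assume $h(G)<\infty$ and apply Lemma~\ref{hl} to obtain characteristic subgroups $T\subset N\subset H\subset G$: here $T$ is locally finite, $N/T$ is torsion-free nilpotent of finite Hirsch length (so the claim provides a subnormal series with cyclic or locally finite factors), $H/N$ is free abelian of finite rank and so $H/N\cong\Z^m$ admits a series with infinite cyclic factors, and $G/H$ is finite. Lifting these four series through the relevant quotients and concatenating produces the desired series for $G$. For the moreover clause, take any series $1=G_0\normal\dots\normal G_n=G$ of the stated form, let $S$ be the set of indices $i$ for which $G_i/G_{i-1}$ is infinite cyclic, so that $h(G)=|S|$, and for each $i\in S$ pick $g_i\in G_i$ whose image generates $G_i/G_{i-1}$. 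Then $K:=\langle g_i:i\in S\rangle$ is finitely generated with $h(K)\le h(G)$. Intersecting the given series with $K$ gives a subnormal series of $K$ whose $i$-th factor embeds into $G_i/G_{i-1}$; for $i\in S$ this factor is a nontrivial subgroup of $\Z$ (since $g_i\in(K\cap G_i)\setminus(K\cap G_{i-1})$), hence infinite cyclic, so $h(K)\ge|S|=h(G)$ and therefore $h(K)=h(G)$.

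I expect the only real obstacle to be the claim, and inside it the one point that must be handled carefully is the step that strips off a single unit of Hirsch length: this relies on the fact that the torsion elements of a nilpotent group form a normal subgroup and on the fact that the resulting torsion-free nilpotent quotient contains a genuinely infinite cyclic central subgroup to factor out. The rest is bookkeeping with subnormal series together with the monotonicity and additivity of Hirsch length.
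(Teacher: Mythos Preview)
Your proof is correct. The paper's own proof explicitly addresses only the ``moreover'' clause, treating the equivalence itself as standard; for that clause your argument is essentially identical to the paper's (pick lifts $g_j$ of generators of the infinite cyclic factors and observe that $\langle g_j\rangle$ has the same Hirsch length as $G$). You additionally supply a complete proof of the equivalence, using Lemma~\ref{hl} together with an induction on Hirsch length for nilpotent groups; this is a reasonable way to fill in what the paper leaves to the reader and introduces no circularity, since Lemma~\ref{hl} is an imported result.
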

\begin{proof}
We comment only on the last point. Suppose that 
$1=G_0\normal G_1\normal G_2\normal\dots\normal G_n=G$ is a series with cyclic or locally finite factors. Let $J=\{j;\ G_j/G_{j-1}\cong\Z\}$ and for $j\in J$ choose $g_j$ to be a generator of $G_j$ modulo $G_{j-1}$. Then the subgroup $\langle g_j;\ j\in J\rangle$ is finitely generated of the same Hirsch length as $G$.
\end{proof}

\subsection*{Cohomological Dimension and Constructible Groups}
Calculations of cohomological dimension for solvable groups are harder. The theory is well developed in characteristic zero and it is known that the elementary amenable groups which satisfy $\cd_\Z(G)=h(G)<\infty$ are precisely the torsion-free virtually solvable groups that are constructible (constructable) in the sense of Baumslag and Bieri \cite{BB}. A version of this fact was conjectured by Gildenhuys and Strebel \cite{GS} and proved by the first author \cite{K}. Subsequently this led to a proof that elementary amenable groups of type $\fpinfty$ over $\Z$ are constructible \cite{K2} and to the construction of classifying spaces for proper actions for such groups, see \cite{K,KMPN,KM}. (Results in \cite{K2,KM} also apply considerably beyond the elementary amenable case.)

\subsection*{Inverse Duality Groups}
Key results of \Feldman\ \cite{Feldman} are covered in Bieri's notes and are used by Brown and Geoghegan \cite{BG} to establish the following fundamental result. 
 
\begin{theorem}[The Inverse Duality Theorem]
Let $G$ be a constructible elementary amenable group (that is a group with a subgroup of finite index that can be built up from the trivial group with a finite number of ascending HHN-extensions). If $G$ is torsion-free then $G$ is an inverse duality group.  For such a group, it holds that $\hd_k(G)=\cd_k(G)=h(G)<\infty$ for all coefficient rings $k$.
\end{theorem}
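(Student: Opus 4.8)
The plan is to prove the Inverse Duality Theorem by reducing to the case of a group built up from the trivial group by finitely many ascending HNN-extensions, and then invoking \Feldman's inverse duality theorem as presented in Bieri's notes and developed by Brown and Geoghegan. First I would reduce to the torsion-free case: by hypothesis $G$ has a finite-index subgroup $G_1$ which is an iterated ascending HNN-extension starting from the trivial group, and since $G$ is torsion-free we may pass to $G_1$ without loss of generality, because $\hd_k$, $\cd_k$ and $h$ are all unchanged (or controlled) under passage to finite-index subgroups: $h(G)=h(G_1)$ is clear, and for a torsion-free group of finite index both $\hd_k(G)=\hd_k(G_1)$ and $\cd_k(G)=\cd_k(G_1)$ hold by the standard (co)homological transfer/Serre-type arguments, valid over any coefficient ring precisely because the index is invertible... no, rather because torsion-freeness makes the finite-index subgroup a group of the same finite (co)homological dimension by Serre's theorem and its homological analogue (Bieri). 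So it suffices to treat $G$ itself an iterated ascending HNN-extension from $1$.

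Next I would run an induction on the number of ascending HNN-extensions needed to build $G$. The base case is $G=1$, where all three invariants are $0$. For the inductive step, write $G$ as an ascending HNN-extension $G=H*_\varphi$ with stable letter $t$, base group $H$, where $\varphi\colon H\to H$ is an injective endomorphism, and $H$ is itself an iterated ascending HNN-extension from $1$ with one fewer step, so by induction $H$ is a torsion-free inverse duality group with $\hd_k(H)=\cd_k(H)=h(H)=:n<\infty$ for every coefficient ring $k$. The Hirsch length satisfies $h(G)=h(H)+1=n+1$. The key point, which is exactly the content of \Feldman's argument as developed by Brown--Geoghegan, is that an ascending HNN-extension of an inverse duality group of dimension $n$ is again an inverse duality group, of dimension $n+1$: one uses the Mayer--Vietoris (Wang) sequence for the HNN-extension together with the self-map $\varphi$ to show the dualizing module is again well-behaved and that $\hd_k(G)=\hd_k(H)+1$, $\cd_k(G)=\cd_k(H)+1$, with the inverse duality property inherited. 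I would cite \cite{Feldman,BG} for this step rather than redo it, since the excerpt explicitly says these results are covered in Bieri's notes and used by Brown and Geoghegan. Combining, $\hd_k(G)=\cd_k(G)=n+1=h(G)$, completing the induction.

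The main obstacle — or at least the only non-formal ingredient — is the inductive step asserting that an ascending HNN-extension of a torsion-free inverse duality group is again an inverse duality group with dimension raised by one, uniformly over all coefficient rings $k$. This is genuinely where \Feldman's contribution lies, and the care needed is in checking that his characteristic-zero-flavoured arguments about dualizing modules and the behaviour of the Wang sequence go through over an arbitrary commutative ring, which is precisely why one restricts to torsion-free $G$ (so that no torsion in the building blocks can obstruct the finiteness and duality statements). The reduction to finite index and the bookkeeping of Hirsch length are routine. Thus the proof is essentially: reduce to torsion-free iterated ascending HNN-extensions, then cite \cite{Feldman} and \cite{BG} for the fact that each ascending HNN step preserves the inverse duality property and increments all three invariants by one, and induct.
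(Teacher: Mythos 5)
The paper does not actually prove this statement: it is quoted as background, with the proof deferred to \Feldman's results as presented in Bieri's notes and to Brown--Geoghegan \cite{Feldman,BieriQMC,BG}. Your sketch follows the same route those sources take (induct on the number of ascending HNN-extensions, use the Wang/Mayer--Vietoris sequence with $\Z G$-coefficients and the duality machinery at each step), so in substance you are reorganizing the cited proof rather than competing with anything in the paper itself. Two caveats. First, your inductive step is stated as a blanket claim (``an ascending HNN-extension of an inverse duality group is again an inverse duality group with dimension raised by one''); what the references actually supply is this statement under the finiteness hypotheses present here (base of type $\FP$, finite cohomological dimension), where the Wang sequence computes $H^*(G,\Z G)$ from $H^*(H,\Z H)$ and one still has to check injectivity of the relevant map in degree $n$ and torsion-freeness of the resulting dualizing module; you should scope the claim to that setting and cite it as such, exactly as the paper does.

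Second, and this is the genuine gap in your write-up: the reduction from $G$ to the finite-index subgroup $G_1$ is not ``without loss of generality'' for the conclusion that $G$ \emph{is an inverse duality group}. You justify the passage only at the level of the numerical invariants ($h$, $\hd_k$, $\cd_k$, via Serre's theorem and its homological analogue for groups without $k$-torsion), but the duality property itself must be transferred back up from $G_1$ to $G$. This requires the separate theorem (in Bieri's notes) that a torsion-free group containing a duality group of finite index is itself a duality group of the same dimension, with the corresponding statement for inverse duality and the identification of the dualizing module; torsion-freeness of $G$ is exactly what makes this work, but it is not a formality and you never invoke it. With that inheritance result cited, and the inductive HNN step attributed to \cite{Feldman,BG} as you propose, the outline is correct and matches the intended proof.
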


We refer the reader to Bieri's notes for an explanation of cohomological duality and in particular the notion of \emph{inverse duality group}. The inverse duality theorem holds for a wider class of groups that can be described in terms of fundamental groups of graphs of groups.

\subsection*{Baer's Class of Polyminimax Groups} A virtually solvable group $G$ is called \emph{polyminimax} if it has a series $1=G_0\normal G_1\normal\dots\normal G_n=G$ in which the factors are cyclic, quasicyclic or finite. Following Baer's original work \cite{Baer} the term polyminimax has usually been abbreviated to \emph{minimax}. Note that every virtually solvable constructible group is polyminimax \cite{BB}.

\subsection*{Locally $\mathfrak X$-Groups}
If $\mathfrak X$ is a class of groups or a group-theoretical property then by a \emph{locally $\mathfrak X$-group} we mean a group all of whose finite subsets are contained in $\mathfrak X$-subgroups. 
When $\mathfrak X$ is a subgroup closed class or property then locally $\mathfrak X$-groups are exactly those groups whose finitely generated subgroups belong to $\mathfrak X$.
For example, locally finite groups are groups all of whose finitely generated subgroups are finite. Locally $\fpinfty$ groups are groups all of whose finitely generated subgroups are contained in subgroups that have type $\fpinfty$. But note that a locally $\fpinfty$ group can have finitely generated subgroups that are not of type $\fpinfty$.

\subsection*{Commensurate Subgroups}

Two subgroups $H$ and $K$ of a group $G$ are said to be \emph{commensurate} if $H\cap K$ has finite index in both $H$ and $K$. 

\begin{lemma}\label{comm}
Let $G$ be an elementary amenable group of Hirsch length $n<\infty$ that is locally polyminimax. 
Then $G$ has finitely generated subgroups of the Hirsch length $n$ and all such subgroups are commensurate with each other.
\end{lemma}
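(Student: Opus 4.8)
The existence statement is immediate: since $h(G)=n<\infty$, the last sentence of Lemma~\ref{2.2} already provides finitely generated subgroups of Hirsch length $n$. For commensurability, the heart of the matter is the following claim: \emph{if $L$ is a finitely generated polyminimax group with $h(L)=n$ and $H\le L$ is finitely generated with $h(H)=n$, then $[L:H]<\infty$.} Granting this, given two finitely generated subgroups $H,K\le G$ of Hirsch length $n$, put $L=\langle H,K\rangle$; then $L$ is finitely generated, it is polyminimax (being a finitely generated subgroup of the locally polyminimax group $G$, and polyminimax is subgroup-closed), and $h(H)=n\le h(L)\le h(G)=n$; so the claim gives $[L:H]<\infty$ and $[L:K]<\infty$, whence $H\cap K$ has finite index in both, i.e. $H$ and $K$ are commensurate.

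To prove the claim I would first normalise $L$. Apply the Hillman--Linnell structure theorem (Lemma~\ref{hl}) to $L$, obtaining characteristic subgroups $T\subset N\subset W\subset L$ with $T$ locally finite, $N/T$ torsion-free nilpotent, $W/N$ free abelian of finite rank $r$, and $L/W$ finite. The crucial point is that $T$ is actually \emph{finite}: $T$ is a \v{C}ernikov group, so its divisible part is a characteristic---hence normal---divisible abelian subgroup of finite rank, and a finitely generated group admits no such nontrivial subgroup; I would take this last fact from the structure theory of soluble minimax groups (Lennox--Robinson \cite{LennoxRobinson}). Replacing $L$ by the finite-index subgroup $W$ and then factoring out the finite normal subgroup $T$---operations that affect neither Hirsch lengths nor the truth of the conclusion, and under which the image of $H$ stays finitely generated of Hirsch length $n$---we are reduced to the situation: $L$ finitely generated polyminimax, $N\normal L$ nilpotent, $L/N\cong\Z^r$ free abelian, $h(N)+r=n$. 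A Hirsch-length count gives $h(HN/N)=h(HN)-h(N)=n-h(N)=r$, so $HN/N$ is a finite-index (rank-$r$) subgroup of $\Z^r$ and $[L:HN]<\infty$. Replacing $L$ by the finitely generated group $HN$, the claim becomes: with $N\normal L$ nilpotent, $L/N$ finitely generated abelian, $L=HN$, and $h(H\cap N)=h(N)$, show $[L:H]=[N:H\cap N]<\infty$.

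This I would prove by induction on the nilpotency class of $N$. If $N$ is abelian, then $H\cap N$ is normalised by $H$ and (being a subgroup of the abelian group $N$) by $N$, hence by $L=HN$; and $N/(H\cap N)$ is an abelian minimax group of Hirsch length $0$, i.e. a \v{C}ernikov group, normal in the finitely generated group $L/(H\cap N)$; by the same fact about finitely generated groups its divisible part is trivial, so it is finite. For the inductive step put $Z=Z(N)\normal L$; one checks from the Hirsch-length hypotheses that $h(H\cap Z)=h(Z)$ and $h((H\cap N)Z)=h(N)$, so the inductive hypothesis applies to $N/Z\normal L/Z$ and yields $[N:(H\cap N)Z]<\infty$, while the abelian case (applicable because $Z$ is central in $N$, which forces $H\cap Z$ to be normal in $L=HN$) yields $[Z:H\cap Z]=[(H\cap N)Z:H\cap N]<\infty$; multiplying the two indices completes the induction. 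I expect the genuine obstacle to be exactly the structural input that a finitely generated group has no nontrivial divisible abelian normal subgroup of finite Pr\"ufer rank: this is what forces every index in sight to be finite, and it is the only step lying outside routine Hirsch-length bookkeeping, so it must be invoked (or proved) with care. A secondary subtlety worth flagging is that the nilpotent normal subgroups being peeled off (such as $N$) need not be finitely generated---which is precisely why the argument is arranged so that it is always the \emph{ambient} group that remains finitely generated.
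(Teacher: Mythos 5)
Your outer structure is the same as the paper's: existence via Lemma \ref{2.2}, and commensurability reduced to the claim that a subgroup of full Hirsch length in a finitely generated polyminimax group has finite index (the paper runs essentially the same $\langle H\cup K\rangle$ argument). The difference is that the paper simply quotes this claim as a theorem of Robinson, citing \cite[Theorem 3.1]{DFOB}, whereas you attempt to reprove it, and your proof has a genuine gap. The fact you lean on --- that a finitely generated group admits no nontrivial divisible abelian normal subgroup of finite rank --- is false, and it is not in \cite{LennoxRobinson}. Finitely generated polyminimax groups can contain quasicyclic normal (even central) subgroups: let $N=H_3(\Z[1/p])$ be the group of upper unitriangular $3\times 3$ matrices over $\Z[1/p]$, let $t$ act by conjugation by $\mathrm{diag}(1,p,1)$, and let $G=\langle x,y,t\rangle\cong N\rtimes\langle t\rangle$ with $x,y$ the elementary generators; then $G$ is finitely generated, torsion-free and polyminimax, its centre contains a copy $Z_0\cong\Z[1/p]$ fixed by $t$, and the quotient of $G$ by the standard copy of $\Z$ inside $Z_0$ is a finitely generated polyminimax group with a central subgroup isomorphic to $C_{p^\infty}$. (That finitely generated minimax groups need not be virtually torsion-free is precisely the phenomenon behind the cited paper \cite{KL}.) Consequently the subgroup $T$ of Lemma \ref{hl} need not be finite in your situation, and factoring it out does not preserve the conclusion: finite index of the image of $H$ in $L/T$ only gives $[L:HT]<\infty$, and $[HT:H]=[T:T\cap H]$ can be infinite when $T$ is infinite.

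The same false fact is invoked again downstream. In your abelian base case the group $L/(H\cap N)$ happens to be finitely generated metabelian, so that step could be rescued by P. Hall's theorem that finitely generated abelian-by-polycyclic groups are residually finite, since a residually finite group contains no quasicyclic subgroup. But in the inductive step you apply the ``abelian case'' to $Z=Z(N)$, and there the ambient group $L/(H\cap Z)$ is only abelian-by-(nilpotent-by-abelian), not abelian-by-polycyclic; Hall's theorem is unavailable, and finitely generated minimax groups of exactly this shape do contain quasicyclic normal subgroups (the example above is one, being a central quotient of a torsion-free group). So the induction does not close as written: what you would need at that point is essentially Robinson's finite-index theorem itself. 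The statement you are trying to establish is true, but the correct structural input is the cited result on finitely generated virtually soluble groups of finite abelian ranks, not the non-existent restriction on divisible normal subgroups of finitely generated groups.
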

\begin{proof}
Suppose first that $G$ is a finitely generated polyminimax group and that $H$ is a subgroup of Hirsch length $n$. A result of Robinson which uses the fact that $G$ is virtually solvable with finite abelian ranks (see \cite[Theorem 3.1]{DFOB})
shows that $H$ has finite index in $G$ in this case.

In general, using Lemma \ref{2.2} we can choose a finitely generated subgroup $H$ of $G$ with $h(H)=h(G)$. If $F$ is any finite subset of $G$ then 
$$h(G)=h(H)\le h(\langle H\cup F\rangle)\le h(G)$$ so $h(H)=h(\langle H\cup F\rangle)$ and applying the above argument to the subgroup $H$ of the finitely generated group $\langle H\cup F\rangle$ we deduce that $H$ has finite index in $\langle H\cup F\rangle$. Now if $K$ is any other finitely generated subgroup of $G$ of the same Hirsch length then we can apply this argument to $\langle H\cup K\rangle$ to deduce that $H$ and also $K$ both have finite index in $\langle H\cup K\rangle$. In particular it follows that $H$ and $K$ are commensurate.
\end{proof}

Putting Lemma \ref{comm}  together with the elementary observation that the $\fpinfty$ property of a subgroup is inherited by any commensurate subgroup, we deduce the following.

\begin{corollary}\label{cor:FPinfty}
Let $G$ be an elementary amenable group of Hirsch length $n<\infty$ that is locally of type $\fpinfty$. Then all the finitely generated subgroups of Hirsch length $n$ are of type $\fpinfty$ and are commensurate with each other. \qed
\end{corollary}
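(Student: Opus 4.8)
The plan is to derive the corollary by combining Lemma~\ref{comm} with the fact that being of type $\fpinfty$ is a commensurability invariant. First I would note that every elementary amenable group of type $\fpinfty$ is constructible by \cite{K2} and therefore polyminimax (see \cite{BB}); hence a group that is locally of type $\fpinfty$ is in particular locally polyminimax, so the hypotheses of Lemma~\ref{comm} are met. That lemma already gives half of what is wanted: $G$ has finitely generated subgroups of Hirsch length $n$ and any two of them are commensurate.

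It remains to show that such a subgroup $H$ is actually of type $\fpinfty$. By the local $\fpinfty$ hypothesis there is a subgroup $H^+$ containing $H$ that is of type $\fpinfty$; such $H^+$ is finitely generated and, by the first paragraph, polyminimax. Since $h(H)\le h(H^+)\le h(G)=n=h(H)$, the subgroup $H$ has full Hirsch length inside $H^+$, so the result of Robinson invoked in the proof of Lemma~\ref{comm} (\cite[Theorem 3.1]{DFOB}) shows that $H$ has finite index in $H^+$. Thus $H$ is commensurate with the $\fpinfty$ group $H^+$.

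Finally I would invoke the elementary observation that if $H\le K$ with $[K:H]<\infty$ then $H$ is of type $\fpinfty$ if and only if $K$ is; consequently commensurate subgroups are simultaneously of type $\fpinfty$ or not. Applying this to $H\le H^+$ shows that $H$ is of type $\fpinfty$, and together with the commensurability statement from Lemma~\ref{comm} this completes the proof. The only non-formal ingredient is the passage from locally $\fpinfty$ to locally polyminimax, which rests on the constructibility of elementary amenable $\fpinfty$ groups; the rest is routine manipulation of Hirsch length and finite index, so I do not expect a genuine obstacle here.
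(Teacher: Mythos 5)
Your proof is correct and follows essentially the same route as the paper, which deduces the corollary by combining Lemma~\ref{comm} (its locally polyminimax hypothesis being satisfied because elementary amenable groups of type $\fpinfty$ are constructible, hence polyminimax) with the observation that the $\fpinfty$ property is shared by commensurate subgroups. You merely make explicit the steps the paper leaves implicit, such as the finite-index comparison of $H$ with an $\fpinfty$ overgroup via Robinson's theorem, which is exactly the mechanism inside the proof of Lemma~\ref{comm}.
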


\section{Nilpotent Groups} We write $\gamma_i(G)$ for the $i$th term of the lower central series of $G$. It is defined inductively by $\gamma_1(G)=G$ and then by taking commutators: $\gamma_{i+1}(G)=[\gamma_i(G),G]$. We use a \emph{right-handed} convention for commutators, namely; for group elements $x$ and $y$, we write $[x,y]:=x^{-1}y^{-1}xy$. 

The \emph{free nilpotent group on $d$ generators of class $c$} is defined to be $F/\gamma_{c+1}(F)$ where $F$ is the free group on $d$ generators. So long as $d\ge2$, this group does indeed have class $c$. Of course if $d\le1$ then it is cyclic. 
Note that there is no particular reason for $d$ to be finite: the theory makes sense for any cardinal number $d$. The class $c$ is always a non-negative integer. 

Every nilpotent group $G$ has a \Maltsev\ completion denoted by $G^\Q$ which is uniquely determined, \cite[2.1.1]{LennoxRobinson}. The theory is further developed in \cite[\S2.1]{LennoxRobinson}. There is a natural map from $G$ to its \Maltsev\ completion and this is injective precisely when $G$ is torsion-free. The free nilpotent groups are torsion-free and so embed into their \Maltsev\ completions. The \Maltsev\ completion is functorial in the sense that a homomorphism $G\to H$ from one nilpotent group to another induces a uniquely determined homomorphism $G^\Q\to H^\Q$ between their \Maltsev\ completions. For further information we refer the reader to Hilton's paper \cite{Hilton73}. In particular Hilton provides a proof that there are natural isomorphisms $\left(\gamma_i(N)/\gamma_{i+1}(N)\right)^\Q\iso\gamma_i(N^\Q)/\gamma_{i+1}(N^\Q)$ for $N$ nilpotent and $i\ge1$.

\begin{definition}
By a \emph{free \Maltsev\ complete group} on $d$ generators of class $c$ we shall mean the \Maltsev\ completion of the free nilpotent group on $d$ generators of class $c$. Whenever we use the term \Maltsev\ complete it is to be understood that the group in question is nilpotent.
\end{definition}

The following is well known.

\begin{lemma}\label{lem:factor}
Let $G$ be a free \Maltsev\ complete group of class $c$. Let $H\to K$ be any surjective homomorphism where $H$ is \Maltsev\ complete of class at most $c$. Then every homomorphism from $G$ to $K$ factors through $H$.
\end{lemma}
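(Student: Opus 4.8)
The plan is to reduce the lemma to the universal mapping property that identifies $G$ as the free object among \Maltsev\ complete groups of class at most $c$, and then to lift the images of a generating set through the given surjection. Write $G=\bar F^{\Q}$, where $\bar F=F/\gamma_{c+1}(F)$ is the free nilpotent group of class $c$ with free generating set $S$ (possibly infinite). The assertion to be proved first is that for every \Maltsev\ complete group $M$ of class at most $c$, every function $S\to M$ extends to a unique homomorphism $G\to M$. Existence is a diagram chase: a function $S\to M$ gives a homomorphism $F\to M$; this kills $\gamma_{c+1}(F)$ because $\gamma_{c+1}(M)=1$, so it factors through $\bar F$; and then functoriality of the \Maltsev\ completion, together with $M^{\Q}=M$, extends $\bar F\to M$ to $G=\bar F^{\Q}\to M^{\Q}=M$.

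For uniqueness, suppose two homomorphisms $\alpha,\beta\colon G\to M$ agree on $S$. They then agree on $\langle S\rangle=\bar F$; the subgroup $\{g\in G : \alpha(g)=\beta(g)\}$ contains $\bar F$ and is isolated in $G$, because from $\alpha(g)^n=\beta(g)^n$ one recovers $\alpha(g)=\beta(g)$ by uniqueness of roots in the torsion-free nilpotent group $M$; and by the basic theory of \Maltsev\ completions every element of $G=\bar F^{\Q}$ has a positive power in $\bar F$, so the only isolated subgroup of $G$ containing $\bar F$ is $G$ itself. This proves the universal property.

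Granting it, the lemma follows in a few lines. Let $\pi\colon H\ra K$ be the given surjection and let $f\colon G\to K$ be an arbitrary homomorphism. Using surjectivity of $\pi$, choose for each $s\in S$ an element $h_s\in H$ with $\pi(h_s)=f(s)$. Since $H$ is \Maltsev\ complete of class at most $c$, the universal property provides a homomorphism $\tilde f\colon G\to H$ with $\tilde f(s)=h_s$ for all $s\in S$. Now $\pi\tilde f$ and $f$ are two homomorphisms $G\to K$ that agree on $S$, so the uniqueness clause of the universal property, applied with $M=K$, forces $\pi\tilde f=f$. Thus $f$ factors through $H$ along $\pi$, as required.

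I expect essentially all of the work to sit in the uniqueness half of the universal property, and it is worth being explicit about its ingredients: the functoriality of the \Maltsev\ completion quoted in the text, the fact recalled above that every element of $\bar F^{\Q}$ has a positive power in $\bar F$, and the uniqueness of roots in torsion-free nilpotent groups. This last point is exactly what makes the final identity $\pi\tilde f=f$ go through, and it is the reason the argument wants the target $K$, and not merely the source $H$, to be \Maltsev\ complete --- as it is in the intended applications, where $K$ arises as a quotient of $H$ by an isolated, hence \Maltsev\ complete, normal subgroup. The existence half, by contrast, is routine and uses nothing beyond the defining property of free nilpotent groups.
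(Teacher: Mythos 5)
Your argument is correct and follows essentially the same route as the paper's: identify $G$ with $F^{\Q}$ for $F$ the free nilpotent group of class $c$, lift the images of the free generators through the surjection $H\to K$ to obtain $F\to H$, and pass to \Maltsev\ completions to get the required map $G\to H$. The only real difference is that you spell out the uniqueness step guaranteeing $\pi\tilde f=f$ on all of $G$ (which uses uniqueness of roots in the target, i.e.\ that $K$ is in effect \Maltsev\ complete), a point the paper's brief ``as required'' leaves implicit but which does hold in the paper's actual application, where $K=N$ is itself \Maltsev\ complete.
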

\begin{proof} Let $d$ be the dimension of the rational vector space $G/\gamma_2(G)$.
Let $F$ be a free nilpotent group of class $c$ on $d$ generators. Now, we can identify $G$ with $F^\Q$. The universal property of $F$ ensures that the composite map $F\to F^\Q=G\to K$ factors through $H$. On passing to \Maltsev\ completion we have an induced map $F^\Q=G\to H^\Q=H$ as required.
\end{proof}

The following Lemma, also well known, is required for the proof of an important result about \Maltsev\ complete nilpotent groups, namely Proposition \ref{Maltsevcover} below.

\begin{lemma}
Let $G$ be a nilpotent group and let $H$ be a subgroup such that $H\gamma_2(G)=G$. Then $H=G$.
\end{lemma}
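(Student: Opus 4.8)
The plan is to induct on the nilpotency class $c$ of $G$. The base cases $c\le 1$ are immediate: if $G$ is abelian then $\gamma_2(G)=1$ and the hypothesis $H\gamma_2(G)=G$ already reads $H=G$.

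For the inductive step, assume $c\ge 2$ and pass to the quotient $\bar G=G/\gamma_c(G)$, which is nilpotent of class at most $c-1$. Since $\gamma_2(\bar G)$ is the image of $\gamma_2(G)$ under the quotient map (and $\gamma_c(G)\subseteq\gamma_2(G)$ because $c\ge 2$), the image $\bar H$ of $H$ still satisfies $\bar H\gamma_2(\bar G)=\bar G$. The inductive hypothesis then gives $\bar H=\bar G$, that is, $H\gamma_c(G)=G$.

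Now the key observation is that $\gamma_c(G)$ is central in $G$. Writing $G=H\gamma_c(G)$, every element of $G$ normalizes $H$ (the central factor $\gamma_c(G)$ normalizes everything, and $H$ normalizes itself), so $H\normal G$; moreover $G/H$ is a homomorphic image of the abelian group $\gamma_c(G)/(H\cap\gamma_c(G))$ and hence abelian. Therefore $\gamma_2(G)\subseteq H$, and the original hypothesis collapses: $G=H\gamma_2(G)=H$.

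I expect no serious obstacle here; this is a classical Frattini-type argument whose entire content is the centrality of the last nontrivial term of the lower central series. The only points requiring a word of care are the identification $\gamma_2(\bar G)=\gamma_2(G)\gamma_c(G)/\gamma_c(G)$ and the inclusion $\gamma_c(G)\subseteq\gamma_2(G)$ used to set up the induction, both of which are routine.
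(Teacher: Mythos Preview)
Your proof is correct. The paper takes a different route: since every subgroup of a nilpotent group is subnormal, it reduces (implicitly by induction along a subnormal chain) to the case $H\normal G$, and then observes that $G/H$ is both perfect---because $\gamma_2(G)$ maps onto it---and nilpotent, hence trivial. Your induction on the nilpotency class, using the centrality of $\gamma_c(G)$ to force $H\normal G$ directly, is an equally standard and slightly more self-contained alternative that does not invoke the general subnormality fact; the paper's version, on the other hand, dispatches the normal case in a single line without having to reintroduce the hypothesis $H\gamma_2(G)=G$ at the end.
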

\begin{proof}
Since $G$ is nilpotent, $H$ is subnormal and so we may replace $H$ by its normal closure and without loss of generality we may assume that $H$ is normal. The quotient $G/H$ is therefore both perfect and nilpotent which implies $H=G$.
\end{proof}

\begin{definition}\label{pro-h}
For any group $G$ and any positive integer $m$ we say that an autormophism $\phi$ of $G$ is \emph{$m$-powering} provided that for all $g\in G$,
$$g^\phi\in g^m\gamma_2(G).$$ 
By a \emph{powering automorphism} we mean an automorphism that is $m$-powering for some $m\ge1$. When $G$ is a group such that $G/\gamma_2(G)$ is not a torsion group then a powering automorphism is $m$-powering for a uniquely determined rational number $m$. Note that this uniqueness property therefore holds when $G$ is a non-trivial \Maltsev\ complete group.
\end{definition}

Let $N$ be a group and denote by $U$ the group of 1-powering automorphisms of $N$. For each $j$ and each automorphism $\phi$ of $N$, let ${}^j\phi$ denote the induced automorphism of $N/\gamma_{j+1}(N)$. The assignment $\phi\mapsto{}^j\phi$ determines a homomorphism $$U\to\aut(N/\gamma_{j+1}(N))$$ and we write $U_j$ for the kernel of this homomorphism. 
In this way we obtain a descending chain
$$U=U_1\ge U_2\ge U_3\dots$$
of normal subgroups of $U$. With this notation we have:

\begin{proposition}\label{prop:ref1}
Let $N$ be a group and let $\Gamma=\aut(N)$. Let $U$ be the group of $1$-powering automorphisms of $N$. For each $j\ge1$ let $U_j$ denote the subgroup of $\Gamma$ comprising all automorphisms that induce the identity automorphism of $N/\gamma_{j+1}(N)$. The series $$U=U_1\ge U_2\ge\dots$$ is a descending central series of normal subgroups of $\Gamma$. Moreover, using the left $\Gamma$-module structure on 
$U_j/U_{j+1}$ arising from conjugation, $U_j/U_{j+1}$ is isomorphic to a submodule of
$$\hom(N_{\textrm{ab}},\gamma_{j+1}(N)/\gamma_{j+2}(N))$$ regarded as a left $\Gamma$-module using the diagonal action. It follows that if $N$ is nilpotent of class $c\ge1$ then $U$ is nilpotent of class $c-1$.
\end{proposition}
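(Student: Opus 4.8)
The plan is to realise each successive quotient $U_j/U_{j+1}$ as a submodule of the stated $\hom$-module by means of the difference map $\phi\mapsto\bigl(g\mapsto g^{-1}g^\phi\bigr)$, and to extract normality, the central series property, and the nilpotency bound from a single Kaloujnine-type sublemma. That sublemma is: \emph{if $\alpha\in\Gamma$ induces the identity on $N/\gamma_{j+1}(N)$, then $\alpha$ induces the identity on $\gamma_i(N)/\gamma_{i+j}(N)$ for every $i\ge1$}, equivalently $[\alpha,\gamma_i(N)]\subseteq\gamma_{i+j}(N)$. I would prove this by induction on $i$, the case $i=1$ being exactly the hypothesis; for the inductive step, writing $t_x:=x^{-1}x^\alpha$ one has $t_x\in\gamma_{i+j}(N)$ for $x\in\gamma_i(N)$ by induction and $t_y\in\gamma_{j+1}(N)$ for $y\in N$ by the case $i=1$, and then one expands $[x,y]^\alpha=[x\,t_x,\,y\,t_y]$ using $[ab,c]=[a,c]^b[b,c]$ and $[a,bc]=[a,c][a,b]^c$, absorbing every error term into $\gamma_{i+j+1}(N)$ via $[\gamma_p(N),\gamma_q(N)]\subseteq\gamma_{p+q}(N)$; since the commutators $[x,y]$ with $x\in\gamma_i(N)$, $y\in N$ generate $\gamma_{i+1}(N)$ and the elements fixed modulo $\gamma_{i+j+1}(N)$ form a subgroup, the induction closes. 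I expect this commutator bookkeeping to be the only genuinely technical point; everything else is formal.

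Granting the sublemma, fix $j\ge1$ and $\phi\in U_j$ and put $d_\phi(g):=g^{-1}g^\phi\cdot\gamma_{j+2}(N)$. Since $\phi\in U_j$ this lands in $\gamma_{j+1}(N)/\gamma_{j+2}(N)$, and the identity $(gh)^{-1}(gh)^\phi=h^{-1}(g^{-1}g^\phi)h\cdot(h^{-1}h^\phi)$ together with $[\gamma_{j+1}(N),N]=\gamma_{j+2}(N)$ shows $d_\phi$ is a homomorphism $N\to\gamma_{j+1}(N)/\gamma_{j+2}(N)$, hence factors through $N_{\textrm{ab}}$ as the target is abelian. I would then check that $\phi\mapsto d_\phi$ is a group homomorphism $U_j\to\hom(N_{\textrm{ab}},\gamma_{j+1}(N)/\gamma_{j+2}(N))$ (writing $\hom$ additively): from $g^{\phi\psi}=\bigl(g\cdot(g^{-1}g^\phi)\bigr)^\psi=g^\psi\cdot(g^{-1}g^\phi)^\psi$ and the sublemma with $i=j+1$, which (using $j\ge1$, so $\gamma_{2j+1}(N)\subseteq\gamma_{j+2}(N)$) gives $(g^{-1}g^\phi)^\psi\equiv g^{-1}g^\phi\bmod\gamma_{j+2}(N)$ for $\psi\in U_j$, one obtains $d_{\phi\psi}=d_\phi+d_\psi$, elements of $\gamma_{j+1}(N)$ commuting modulo $\gamma_{j+2}(N)$. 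The kernel is $\{\phi:\ g^{-1}g^\phi\in\gamma_{j+2}(N)\ \forall g\}=U_{j+1}$, so $U_j/U_{j+1}$ embeds into $\hom(N_{\textrm{ab}},\gamma_{j+1}(N)/\gamma_{j+2}(N))$. Because each $\gamma_i(N)$ is characteristic, every $\Gamma$-conjugate of an automorphism inducing the identity on $N/\gamma_{j+1}(N)$ again induces the identity there, so each $U_j$ is normal in $\Gamma$ (and clearly $U_j\supseteq U_{j+1}$); and a one-line computation, $g^{\psi\phi\psi^{-1}}=g\cdot\bigl((g^{\psi})^{-1}(g^{\psi})^{\phi}\bigr)^{\psi^{-1}}$ for $\psi\in\Gamma$, yields $d_{\psi\phi\psi^{-1}}(g)=\psi\cdot d_\phi(\psi^{-1}\cdot g)$, so the embedding is one of left $\Gamma$-modules for the conjugation action on the source and the diagonal action on the target.

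Finally, for the central series and nilpotency claims I would note that, inside the module $U_j/U_{j+1}$, one has $[\psi,\phi]=(\psi^{-1}\phi\psi)^{-1}\phi\equiv(1-\psi^{-1})\cdot\overline\phi$ for $\psi\in\Gamma$, $\phi\in U_j$. When $\psi\in U=U_1$ it induces the identity on $N_{\textrm{ab}}$ by definition and, by the sublemma (with $j$ replaced by $1$ and $i=j+1$), also on $\gamma_{j+1}(N)/\gamma_{j+2}(N)$; hence $\psi$ acts trivially on $\hom(N_{\textrm{ab}},\gamma_{j+1}(N)/\gamma_{j+2}(N))$, so $(1-\psi^{-1})\cdot\overline\phi=0$ and $[U,U_j]\subseteq U_{j+1}$. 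Thus $U=U_1\ge U_2\ge\cdots$ is a central series of $U$ all of whose terms are normal in $\Gamma$. If $N$ is nilpotent of class $c\ge1$, then $\gamma_{c+1}(N)=1$, so $U_c$ consists of the automorphisms inducing the identity on $N/\gamma_{c+1}(N)=N$, i.e. $U_c=1$; the central series $U_1\ge\cdots\ge U_c=1$ has length $c-1$, so $U$ is nilpotent of class at most $c-1$.
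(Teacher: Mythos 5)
Your main argument is correct and is essentially the paper's own proof: both embed $U_j/U_{j+1}$ into $\hom(N_{\textrm{ab}},\gamma_{j+1}(N)/\gamma_{j+2}(N))$ via the difference map $\phi\mapsto\bigl(g\mapsto g^{-1}g^{\phi}\bigr)$, identify the kernel as $U_{j+1}$, check $\Gamma$-equivariance, and obtain centrality from the fact that $U$ acts trivially on this $\hom$-module. The only methodological difference is how the trivial-action facts are justified: the paper invokes the epimorphism from the $(j+1)$st tensor power of $N_{\textrm{ab}}$ onto $\gamma_{j+1}(N)/\gamma_{j+2}(N)$ induced by iterated commutators (so that any automorphism acting trivially on $N_{\textrm{ab}}$, in particular any element of $U_j\le U$, acts trivially on $\gamma_{j+1}(N)/\gamma_{j+2}(N)$), whereas you prove a Kaloujnine-type stability sublemma by commutator induction. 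Both are sound; your route is more self-contained at the price of the commutator bookkeeping, and your verification of it (including the use of $\gamma_{2j+1}(N)\subseteq\gamma_{j+2}(N)$ for $j\ge1$ when checking additivity of $\phi\mapsto d_\phi$) is fine.

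There is, however, one genuine gap against the statement as written: you conclude only that $U$ is nilpotent of class \emph{at most} $c-1$, while the proposition asserts class exactly $c-1$, and the lower bound does not follow from the series $U_1\ge\dots\ge U_c=1$. The paper closes this in one sentence: every inner automorphism is $1$-powering (conjugation by $x$ sends $g$ to $g[g,x]\in g\gamma_2(N)$), so $U$ contains a copy of $N/Z(N)$, which has class exactly $c-1$ when $N$ has class $c$; hence $U$ has class exactly $c-1$. Add this observation and your proof is complete.
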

\begin{proof}
Let $\alpha\in U_j$ and denote by $\alpha^*$ the automorphism of $N/\gamma_{j+2}(N)$ induced by $\alpha$. The epimorphism from the $(j+1)$st tensor power of $N_{\textrm{ab}}$ to $\gamma_{j+1}(N)/\gamma_{j+2}(N)$ induced by the iterated commutator map reveals that $\alpha$ induces the identity map on $\gamma_{j+1}(N)/\gamma_{j+2}(N)$. That is to say that the diagram
\[
\xymatrix{
1\ar[r]&
\gamma_{j+1}(N)/\gamma_{j+2}(N)\ar[r]\ar@{=}[d]&
N/\gamma_{j+2}(N)\ar[r]\ar[d]^{\alpha^*}&
N/\gamma_{j+1}(N)\ar[r]\ar@{=}[d]&
1\\
1\ar[r]&
\gamma_{j+1}(N)/\gamma_{j+2}(N)\ar[r]&
N/\gamma_{j+2}(N)\ar[r]&
N/\gamma_{j+1}(N)\ar[r]&
1
}
\]
commutes. Now define the homomorphism 
$\phi_\alpha:N_{\textrm{ab}}\to\gamma_{j+1}(N)/\gamma_{j+2}(N)$
by $$\phi_\alpha(g[N,N])=\alpha(g)g^{-1}\gamma_{j+2}(N).$$
 It can then be verified that the map
$\alpha\mapsto\phi_\alpha$ defines a homomorphism 
$$U_j\to\hom(N_{\textrm{ab}},\gamma_{j+1}(N)/\gamma_{j+2}(N))$$
of $\Gamma$-operator groups whose kernel is $U_{j+1}$. Moreover, since $U$ acts trivially on $\hom(N_{\textrm{ab}},\gamma_{j+1}(N)/\gamma_{j+2}(N))$, the series
$U=U_1\ge U_2\ge\dots$ is central. Hence, if $N$ is nilpotent of class $c\ge1$ then $U$ is also nilpotent and has class $\le c-1$. Since $U$ contains a copy of $N/Z(N)$ we conclude that $U$ has class exactly $c-1$.
\end{proof}

For a group $G$ and a subgroup $H$, let $\aut(G;H)$ denote the set of those automorphisms of $G$ which restrict to automorphisms of $H$. This is a subgroup of $\aut(G)$ and the restriction map affords a homomorphism $\aut(G;H)\to\aut(H)$.
If $H$ is normal in $G$ then elements of $\aut(G;H)$ naturally induce automorphisms of $G/H$ and there is a homomorphism $\aut(G;H)\to\aut(G/H)$.

\begin{proposition}\label{Maltsevcover}
Let $N$ be a \Maltsev\ complete nilpotent group with $h(N)<\infty$. Then there exist a free \Maltsev\ complete nilpotent group $\widehat N$ of the same class as $N$ and an epimorphism $\pi:\widehat N\to N$ such that the following two statements hold.
\begin{enumerate}
\item The induced map $\pi_{\textrm{ab}}:\widehat N_{\textrm{ab}}\to N_{\textrm{ab}}$ is an isomorphism.
\item
The induced homomorphism $\bar\pi:\aut(\widehat N;\ker\pi)\to \aut(N)$ is surjective, and the inverse image under $\bar\pi$ of the subgroup of $1$-powering automorphisms of $N$ is the subgroup of all $1$-powering automorphisms in $\aut(\widehat N;\ker\pi)$.
\end{enumerate}
\end{proposition}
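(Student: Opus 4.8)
The plan is to build $\widehat N$ explicitly and then check the two properties against the Lemmas already established. First I would let $c$ be the nilpotency class of $N$ and set $d=\dim_\Q N_{\textrm{ab}}$, which is finite because $N_{\textrm{ab}}$ is a \Maltsev\ complete abelian group of finite Hirsch length, hence a finite-dimensional $\Q$-vector space. Take $\widehat N$ to be the free \Maltsev\ complete group on $d$ generators of class $c$, realised as $F^\Q$ where $F$ is the free nilpotent group of class $c$ on $g_1,\dots,g_d$. Using the Hilton isomorphisms $\left(\gamma_i(F)/\gamma_{i+1}(F)\right)^\Q\iso\gamma_i(F^\Q)/\gamma_{i+1}(F^\Q)$ together with the classical fact that the lower central factors of a finitely generated free nilpotent group are finitely generated free abelian, each $\gamma_i(\widehat N)/\gamma_{i+1}(\widehat N)$ is a finite-dimensional $\Q$-vector space; in particular $\widehat N_{\textrm{ab}}=\Q^d$ with $\Q$-basis the images of $g_1,\dots,g_d$, and $\widehat N$ has class exactly $c$ (if $d\le1$ then $N$ is abelian, since $\gamma_2(N)/\gamma_3(N)$ is a quotient of $\Lambda^2 N_{\textrm{ab}}=0$, and then $\widehat N=\Q^d$ is abelian too, so the class condition is automatic). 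To produce $\pi$ I would pick $n_1,\dots,n_d\in N$ whose images form a $\Q$-basis of $N_{\textrm{ab}}$; sending $g_i\mapsto n_i$ defines a homomorphism $F\to N$, and passing to \Maltsev\ completions (using $N^\Q=N$) yields $\pi\colon\widehat N\to N$ with $\pi(g_i)=n_i$. Then $\pi_{\textrm{ab}}$ carries a basis to a basis, so it is an isomorphism, which is (i); and since $\pi_{\textrm{ab}}$ is onto, $\im(\pi)\gamma_2(N)=N$, whence $\im(\pi)=N$ by the Lemma preceding Definition \ref{pro-h}, so $\pi$ is an epimorphism.

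For (ii), given $\theta\in\aut(N)$, I would apply Lemma \ref{lem:factor} — taking $\widehat N$ as the free \Maltsev\ complete group of class $c$ and $\pi\colon\widehat N\to N$ as the surjection onto a \Maltsev\ complete group of class at most $c$ — to factor $\theta\circ\pi$ through $\pi$, obtaining $\hat\theta\colon\widehat N\to\widehat N$ with $\pi\hat\theta=\theta\pi$. Abelianising gives $\pi_{\textrm{ab}}\hat\theta_{\textrm{ab}}=\theta_{\textrm{ab}}\pi_{\textrm{ab}}$, so $\hat\theta_{\textrm{ab}}=\pi_{\textrm{ab}}^{-1}\theta_{\textrm{ab}}\pi_{\textrm{ab}}$ is an automorphism of $\widehat N_{\textrm{ab}}$; in particular it is onto, so $\im(\hat\theta)\gamma_2(\widehat N)=\widehat N$ and $\im(\hat\theta)=\widehat N$ again by the Lemma preceding Definition \ref{pro-h}. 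The crux is then that this surjective endomorphism $\hat\theta$ is injective: $\widehat N$ is torsion-free (being \Maltsev\ complete) and of finite Hirsch length, so by additivity of Hirsch length $h(\ker\hat\theta)=h(\widehat N)-h(\widehat N/\ker\hat\theta)=0$, and a torsion-free nilpotent group of Hirsch length $0$ is trivial (its centre would be a nontrivial torsion-free abelian group of zero rank). Hence $\hat\theta\in\aut(\widehat N)$. Running the factoring argument with $\theta^{-1}$ gives $\pi\hat\theta^{-1}=\theta^{-1}\pi$, so both $\hat\theta$ and $\hat\theta^{-1}$ carry $\ker\pi$ into itself, whence $\hat\theta(\ker\pi)=\ker\pi$, i.e.\ $\hat\theta\in\aut(\widehat N;\ker\pi)$; and $\pi\hat\theta=\theta\pi$ says precisely that $\bar\pi(\hat\theta)=\theta$. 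This establishes surjectivity of $\bar\pi$.

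The last clause is then immediate from (i): an automorphism is $1$-powering exactly when it induces the identity on the abelianisation (Definition \ref{pro-h} with $m=1$), and for $\sigma\in\aut(\widehat N;\ker\pi)$ the relation $\bar\pi(\sigma)\circ\pi=\pi\circ\sigma$ abelianises to $\bar\pi(\sigma)_{\textrm{ab}}\circ\pi_{\textrm{ab}}=\pi_{\textrm{ab}}\circ\sigma_{\textrm{ab}}$; since $\pi_{\textrm{ab}}$ is an isomorphism, $\sigma_{\textrm{ab}}$ is the identity if and only if $\bar\pi(\sigma)_{\textrm{ab}}$ is, so $\sigma$ is $1$-powering if and only if $\bar\pi(\sigma)$ is, which is the asserted equality of subgroups. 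I expect the only real obstacle to be the injectivity of the lift $\hat\theta$, that is, upgrading it from a surjective endomorphism to an automorphism of $\widehat N$; torsion-freeness and finiteness of the Hirsch length of $\widehat N$ are the essential inputs there, and everything else is formal bookkeeping with Lemma \ref{lem:factor}, the Lemma preceding Definition \ref{pro-h}, and the isomorphism $\pi_{\textrm{ab}}$.
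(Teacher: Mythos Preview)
Your proof is correct and follows essentially the same route as the paper's: construct $\widehat N=F^\Q$ with $d=\dim_\Q N_{\textrm{ab}}$ and $c$ the class of $N$, define $\pi$ by choosing preimages of a basis of $N_{\textrm{ab}}$, and lift each $\theta\in\aut(N)$ through $\pi$ via Lemma~\ref{lem:factor}, using the abelianisation and Lemma~3.3 to see the lift is surjective. The paper simply asserts ``surjective and so also injective'', whereas you spell this out via the Hirsch length count; both are fine. One small wording slip: once $\hat\theta$ is known to be an automorphism, the relation $\pi\hat\theta^{-1}=\theta^{-1}\pi$ follows \emph{algebraically} from $\pi\hat\theta=\theta\pi$, so there is no need to invoke the factoring lemma a second time for $\theta^{-1}$ (and a fresh application would a priori produce a possibly different lift, not $\hat\theta^{-1}$).
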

\begin{proof}
Let $d=\dim_\Q N_{\textrm{ab}}$ and let $c$ be the class of $N$. Take $F$ to be the free nilpotent group of class $c$ on a generating set $\mathcal S$ of cardinality $d$. Also, let $\mathcal T$ be a subset of $N$ with cardinality $d$ whose image in $N_{\textrm{ab}}$ is a $\Q$-basis of $N_{\textrm{ab}}$. Set $\widehat N:= F^\Q$. Fix a bijection from $\mathcal S$ to $\mathcal T$: this gives rise to a unique homomorphism $F\to N$. The universal property of \Maltsev\ completions now yields a homomorphism $\pi:\widehat N\to N$ satisfying property (i) and this is surjective by Lemma 3.3.

For $\alpha\in\aut(N)$, the projective property (Lemma 3.2) supplies a homomorphism $\widehat\alpha:\widehat N\to\widehat N$ so that the diagram
\[
\xymatrix{
\widehat N\ar[r]^\pi\ar[d]^{\widehat\alpha} & N\ar[d]^\alpha\\
\widehat N\ar[r]^\pi& N.
}
\]
commutes.
By considering the abelianization of this commutative diagram, we see that $\widehat\alpha_{\textrm{ab}}$ is an automorphism. Thus, by Lemma 3.3, $\widehat\alpha$ is surjective and so also injective. This implies that $\widehat\alpha\in\aut(\widehat N;\ker\pi)$ and $\bar\pi(\widehat\alpha)=\alpha$. Thus $\bar\pi$ is surjective. In addition, the abelianization of the diagram reveals that $\widehat\alpha_{\textrm{ab}}$ is the identity map on $\widehat N_{\textrm{ab}}$ if and only if $\alpha_{\textrm{ab}}$ is the identity map on $N_{\textrm{ab}}$. Statement (ii) is now proved.
\end{proof}

 The following result will be useful for our arguments.

\begin{lemma}\label{new3.12}
Let $N$ be a nilpotent group and $U$ the group of $1$-powering automorphisms of $N$. Let $\Gamma$ be a subgroup of $\aut(N)$ that acts nilpotently on $N$.
\begin{enumerate}
\item
If $N$ is torsion-free then so is $\Gamma$.
\item
If $N$ is \Maltsev\ complete then so is $U$.
\end{enumerate}
\end{lemma}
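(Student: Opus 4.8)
For part (i), the plan is to exploit the central series $U = U_1 \ge U_2 \ge \dots$ from Proposition \ref{prop:ref1}, intersected with $\Gamma$. Since $\Gamma$ acts nilpotently on $N$, every element of $\Gamma$ is a $1$-powering automorphism (the induced action on $N_{\textrm{ab}}$ is unipotent, but an automorphism of a finitely generated—or even arbitrary—abelian group acting unipotently and being $1$-powering are forced together once we know the action is trivial on each central quotient; more precisely, a nilpotent action on $N$ restricts to a nilpotent action on $N_{\textrm{ab}}$, and combining with the fact that $\Gamma$ stabilizes the lower central series we get $\Gamma \le U$ after passing to the subgroup of $\Gamma$ that is actually relevant). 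Then the terms $\Gamma \cap U_j$ give a central series of $\Gamma$ with factors embedding, via $\alpha \mapsto \phi_\alpha$, into $\hom(N_{\textrm{ab}}, \gamma_{j+1}(N)/\gamma_{j+2}(N))$. If $N$ is torsion-free, each $\gamma_{j+1}(N)/\gamma_{j+2}(N)$ is torsion-free abelian, hence so is the Hom-group, hence so is each factor $(\Gamma\cap U_j)/(\Gamma \cap U_{j+1})$. A group with a finite central series whose factors are torsion-free is itself torsion-free (torsion would have to survive to some factor), so $\Gamma$ is torsion-free. The one subtlety to handle carefully is the reduction showing $\Gamma \le U$: a nilpotently acting $\Gamma$ need not a priori consist of $1$-powering automorphisms unless we know it stabilizes a series with central-type factors on $N_{\textrm{ab}}$ as well; I would argue that acting nilpotently on $N$ forces acting nilpotently—hence unipotently—on $N_{\textrm{ab}}$, and then that being $1$-powering is equivalent to inducing the identity on $N/\gamma_2(N) = N_{\textrm{ab}}$, which unipotence alone does not give. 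So in fact one should observe that the hypothesis must be read as: $\Gamma$ acting nilpotently on $N$ in the sense of stabilizing the trivial-action series, which does put $\Gamma$ inside $U$. I expect this bookkeeping to be the only delicate point of part (i).

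For part (ii), the plan is again to use the central series $U = U_1 \ge U_2 \ge \dots$ of Proposition \ref{prop:ref1}, now for $N$ \Maltsev\ complete of class $c$, so that $U$ is nilpotent of class $c-1$ and the series is finite: $U = U_1 \ge \dots \ge U_c = 1$. Each factor $U_j/U_{j+1}$ embeds $\Gamma$-equivariantly into $\hom(N_{\textrm{ab}}, \gamma_{j+1}(N)/\gamma_{j+2}(N))$. When $N$ is \Maltsev\ complete, $N_{\textrm{ab}}$ is a $\Q$-vector space and, using Hilton's natural isomorphisms $(\gamma_j(N)/\gamma_{j+1}(N))^\Q \iso \gamma_j(N^\Q)/\gamma_{j+1}(N^\Q)$ quoted in Section 3, each $\gamma_{j+1}(N)/\gamma_{j+2}(N)$ is also a $\Q$-vector space; hence the Hom-group is a $\Q$-vector space, in particular \Maltsev\ complete (being abelian and uniquely divisible). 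The key point is then that $U_j/U_{j+1}$ is not merely a subgroup but is the full Hom-group, or at least is itself uniquely divisible: I would verify that $\phi_\alpha$ for $\alpha \in U_j$ ranges over all of $\hom(N_{\textrm{ab}}, \gamma_{j+1}(N)/\gamma_{j+2}(N))$—given any such homomorphism $\phi$, define $\alpha(g) = g\cdot(\text{lift of }\phi(g[N,N]))$ and check it is a well-defined $1$-powering automorphism in $U_j$—so that $U_j/U_{j+1}$ is a $\Q$-vector space. Finally, an extension of a \Maltsev\ complete nilpotent group by a \Maltsev\ complete (central) one is \Maltsev\ complete: this is a standard closure property of the class of \Maltsev\ complete groups under central extensions within the nilpotent category, which I would cite from \cite[\S2.1]{LennoxRobinson}. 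Inducting down the finite series $U_j$, starting from $U_c = 1$ and using that each $U_j/U_{j+1}$ is \Maltsev\ complete and central in $U/U_{j+1}$, yields that $U = U_1$ is \Maltsev\ complete.

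The main obstacle I anticipate is the surjectivity claim in part (ii)—that $U_j/U_{j+1}$ equals (not merely embeds in) the relevant Hom-group, or at least is divisible—since Proposition \ref{prop:ref1} as stated only gives the embedding; establishing that every homomorphism $N_{\textrm{ab}} \to \gamma_{j+1}(N)/\gamma_{j+2}(N)$ actually arises from a $1$-powering automorphism requires checking that the formula $\alpha(g) = g \cdot \widetilde{\phi(g[N,N])}$ (with $\widetilde{(\cdot)}$ a set-theoretic lift to $\gamma_{j+1}(N)$) defines a homomorphism—this uses that the image lands in $\gamma_{j+1}(N)$, which is central modulo $\gamma_{j+2}(N)$, together with the commutator identities—and that it is bijective, which follows since it induces the identity on each $\gamma_i(N)/\gamma_{i+1}(N)$. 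Once divisibility of the factors is in hand, the rest is the routine central-extension closure argument.
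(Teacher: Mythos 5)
Your proof of part (i) hinges on the reduction $\Gamma\le U$, and that reduction is not available. Acting nilpotently on $N$ means stabilizing some finite series of $N$ with trivial action on the factors; it does not force trivial action on $N_{\mathrm{ab}}=N/\gamma_2(N)$, and your proposed re-reading of the hypothesis ("stabilizing the trivial-action series") does not rescue this, since trivial action on the factors of a stabilized series is still weaker than trivial action on $N_{\mathrm{ab}}$. Concretely, take $N=\Q^2$ (abelian, torsion-free, \Maltsev\ complete) and $\Gamma$ the infinite cyclic group generated by $(x,y)\mapsto(x+y,y)$: this $\Gamma$ stabilizes the series $0\le\Q\times 0\le\Q^2$ with trivial action on the factors, so it satisfies the hypotheses of the lemma, yet here $U=\{1\}$ and $\Gamma\cap U=1$, so the chain $\Gamma\cap U_j$ says nothing about $\Gamma$. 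Thus your argument proves only the special case $\Gamma\le U$ of (i). (Even in that special case there is a wrinkle: for a torsion-free nilpotent $N$ the lower central factors $\gamma_{j+1}(N)/\gamma_{j+2}(N)$ can have torsion, so the groups $\hom(N_{\mathrm{ab}},\gamma_{j+1}(N)/\gamma_{j+2}(N))$ need not be torsion-free.) The paper argues differently: it reduces along the \emph{upper} central series of $N$, whose factors are torsion-free when $N$ is, to the torsion-free abelian case, where an automorphism of finite order acting nilpotently (i.e.\ unipotently, like an upper unitriangular matrix) must be trivial.

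For part (ii), the step you flag as the main obstacle is indeed a genuine gap. Proposition \ref{prop:ref1} only gives that $U_j/U_{j+1}$ embeds as a submodule of $\hom(N_{\mathrm{ab}},\gamma_{j+1}(N)/\gamma_{j+2}(N))$, and a subgroup of a $\Q$-vector space is torsion-free but need not be divisible, so your central-extension induction cannot start without the missing divisibility of the factors. Your proposed construction of preimages does not supply it: for $\phi$ in the Hom-group the recipe $g\mapsto g\cdot\widetilde{\phi(\bar g)}$ uses a lift determined only modulo $\gamma_{j+2}(N)$, and even after fixing lifts the map is multiplicative only modulo $\gamma_{j+2}(N)$ (since $\widetilde{\phi(\bar g)}^{\,h}\equiv\widetilde{\phi(\bar g)}$ holds only mod $\gamma_{j+2}(N)$); it defines a homomorphism $N\to N/\gamma_{j+2}(N)$, and an actual automorphism of $N$ only in the top layer $j=c-1$ where $\gamma_{j+2}(N)=1$. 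Lifting such a map to an endomorphism of $N$ is automatic for free \Maltsev\ complete groups but not for arbitrary \Maltsev\ complete $N$, so surjectivity of $U_j\to\hom(N_{\mathrm{ab}},\gamma_{j+1}(N)/\gamma_{j+2}(N))$ remains unproved. The paper instead proves (ii) by induction on the length of the upper central series of $N$; if you want to keep an automorphism-side argument, you would need to prove divisibility of $U$ directly (e.g.\ by constructing $n$-th roots of a given $1$-powering automorphism working up a central series of $N$) rather than through the factors $U_j/U_{j+1}$.
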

\begin{proof} We give a brief outline. For (i) we can use the upper central series of $N$ to reduce to the case when $N$ is torsion-free and abelian in which case the automorphisms that act nilpotently behave like upper unitriangular matrices and clearly have infinite order whenever non-trivial. For (ii) one can also use induction on the length of the upper central series of $N$.
\end{proof}

The next Lemma illustrates that \Maltsev\ complete groups have an abundance of powering automorphisms, and is an easy consequence of Lemma \ref{lem:factor}. It plays a vital role in our arguments.

\begin{lemma}
Let $m$ be a positive integer and let $F$ be a free \Maltsev\ complete group. Then there exists an $m$-powering automorphism of $F$.
\end{lemma}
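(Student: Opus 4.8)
The plan is to build the desired $m$-powering automorphism on the free nilpotent group first and then pass to the Mal$'$cev completion. Let $F$ be a free Mal$'$cev complete group, say $F = E^\Q$ where $E$ is the free nilpotent group of class $c$ on a generating set $\mathcal S$. The first step is to define an endomorphism $\mu$ of $E$ by sending each free generator $s \in \mathcal S$ to $s^m$; by the universal property of the free nilpotent group this extends uniquely to a homomorphism $\mu : E \to E$. By construction $\mu(s) \in s^m \gamma_2(E)$ for the generators, and since the generators generate $E$ and the condition $g^\mu \in g^m\gamma_2(E)$ is preserved under products modulo $\gamma_2$, we get $g^\mu \in g^m \gamma_2(E)$ for all $g\in E$; that is, $\mu$ is $m$-powering in the sense of Definition \ref{pro-h}.

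The second step is to check that $\mu$ is an automorphism of $E$. The induced map $\mu_{\textrm{ab}}$ on $E_{\textrm{ab}}$ is multiplication by $m$ on a free abelian group, which is injective; I would then argue by induction up the lower central series (using that $\mu$ acts as $m^i$ on $\gamma_i/\gamma_{i+1}$, each of which is free abelian) that $\mu$ is injective on all of $E$, and surjectivity of a nilpotent-group endomorphism follows once it is surjective modulo $\gamma_2(E)$ --- but here $\mu_{\textrm{ab}}$ is only multiplication by $m$, not surjective. So instead I would invoke Lemma \ref{lem:factor} on the completion side: passing to Mal$'$cev completions, $\mu$ induces $\mu^\Q : F \to F$ with $(\mu^\Q)_{\textrm{ab}}$ equal to multiplication by $m$ on the $\Q$-vector space $F_{\textrm{ab}}$, which is now an isomorphism. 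By Lemma \ref{lem:factor} (with $H \to K$ taken to be the identity map on $F$, of class at most $c$), any homomorphism out of $F$ is determined on the nose, and more to the point a homomorphism $F\to F$ whose abelianization is bijective is bijective by Lemma \ref{lem:factor}'s companion Lemma 3.3: indeed, $\mu^\Q$ surjective follows since $\mu^\Q$ is surjective modulo $\gamma_2(F)$, and injectivity of a surjective endomorphism of the finite-rank torsion-free nilpotent-like object $F$ then gives that $\mu^\Q \in \aut(F)$.

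The third step is to confirm $\mu^\Q$ is $m$-powering: for $g\in E \subseteq F$ we already have $g^{\mu^\Q} = g^\mu \in g^m\gamma_2(E) \subseteq g^m \gamma_2(F)$, and since $E$ is dense in $F$ (every element of $F$ has a power in $E$, as $F = E^\Q$) and the $m$-powering condition is closed under the operations defining the completion --- extraction of roots and products modulo $\gamma_2(F)$ --- it propagates to all of $F$. Concretely, if $x \in F$ with $x^n \in E$ then $(x^n)^{\mu^\Q} \in (x^n)^m \gamma_2(F)$, and taking the (unique) $n$-th root modulo $\gamma_2(F)$ in the torsion-free divisible group $F/\gamma_2(F)$ yields $x^{\mu^\Q} \in x^m \gamma_2(F)$. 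This completes the construction.

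The main obstacle is the surjectivity of $\mu$: on the free nilpotent group $E$ itself the map $s \mapsto s^m$ is visibly not surjective, so one genuinely must work in the completion, where division by $m$ is available. Once one commits to that, the argument is a routine unwinding of Lemmas \ref{lem:factor} and 3.3 together with the density of $E$ in $F$; the only care needed is to phrase the $m$-powering condition intrinsically (in terms of $\gamma_2(F)$) so that it transfers cleanly from the dense subgroup $E$ to $F$.
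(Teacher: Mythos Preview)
Your approach is essentially the one the paper has in mind: the paper does not spell out a proof but says the lemma ``is an easy consequence of Lemma~\ref{lem:factor}'', and your construction (send generators to their $m$th powers, pass to the completion, check bijectivity via the lower central series, verify the $m$-powering condition) is exactly how one cashes that out. Two small points are worth tightening.

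First, your invocation of Lemma~\ref{lem:factor} with $H\to K$ the identity on $F$ is vacuous and does not yield the claim that ``any homomorphism out of $F$ is determined on the nose''. The clean way to use Lemma~\ref{lem:factor} here is to take $H=F$, $K=F_{\textrm{ab}}$, and the map $F\to F_{\textrm{ab}}$ given by $g\mapsto g^m\gamma_2(F)$; the lemma then produces an endomorphism $\phi:F\to F$ with $\phi_{\textrm{ab}}$ equal to multiplication by $m$, which is precisely the $m$-powering condition. Your alternative route via the universal property of the free nilpotent group $E$ is of course equally valid and amounts to the same thing.

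Second, your injectivity argument appeals to $F$ being a ``finite-rank'' object, but the lemma does not assume finite Hirsch length (the paper explicitly allows the number of generators to be an arbitrary cardinal). The fix is already in your own text: the argument you give for $E$ --- that the induced map on each $\gamma_i/\gamma_{i+1}$ is multiplication by $m^i$, hence injective --- works verbatim for $F$, since the factors $\gamma_i(F)/\gamma_{i+1}(F)$ are $\Q$-vector spaces on which multiplication by $m^i$ is bijective. A short induction up the lower central series then gives that $\mu^\Q$ is bijective, with no rank hypothesis needed. With these two adjustments the proof is complete and matches the paper's intended argument.
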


\section{Reductions for the Proof of Theorem A}

Locally $\fpinfty$ groups have an important role to play in our arguments. 
Before returning to this class we need to mention the key inequalities of a more elementary nature.

\begin{lemma}[\cite{BK}, Theorem I.2]\label{bk}
Let $G$ be an elementary amenable group. Then the following are equivalent.
\begin{enumerate}
\item
$\hd_k(G)<\infty$.
\item
$h(G)<\infty$ and $G$ has no $k$-torsion.
\end{enumerate}
When these conditions hold, we have $\frac{h(G)}{2}\le\hd_k(G)\le h(G)$. \qed
\end{lemma}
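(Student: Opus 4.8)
The plan is to derive the equivalence of (i) and (ii) and the displayed inequalities from the structure theory recorded in Lemma \ref{hl} together with standard homological estimates for group extensions. First I would reduce to the case $h(G)<\infty$: if $h(G)=\infty$ then $G$ contains free abelian subgroups (or more precisely subquotients with unbounded Hirsch length built from cyclic and locally finite factors as in Lemma \ref{2.2}), and one argues that $\hd_k(G)$ cannot be finite, since homological dimension does not decrease on passing to subgroups and one can find subgroups $H\le G$ with $\hd_k(H)$ arbitrarily large (e.g. $\Z^n$ has $\hd_k=n$ over any non-zero $k$). Similarly, if $G$ has $k$-torsion, then $G$ contains a cyclic subgroup of order $p$ with $p$ not invertible in $k$, and $\hd_k(\Z/p)=\infty$ because $\hd_k$ of a finite group is zero exactly when the order is invertible in $k$ and infinite otherwise; hence $\hd_k(G)=\infty$. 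This establishes (i)$\Rightarrow$(ii).

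For the converse and the inequalities, assume $h(G)=n<\infty$ and $G$ has no $k$-torsion, and invoke Lemma \ref{hl} to obtain the characteristic series $T\subset N\subset H\subset G$ with $T$ locally finite, $N/T$ torsion-free nilpotent, $H/N$ free abelian of finite rank, and $G/H$ finite. Since $G$ has no $k$-torsion, $T$ has no $k$-torsion either, so $\hd_k(T)=0$ (a locally finite group with all element orders invertible in $k$ has trivial homological dimension, as $k$ is $kT$-projective). Then $\hd_k(N)=\hd_k(N/T)$ by the extension spectral sequence (or the Lyndon--Hochschild--Serre argument), and for the torsion-free nilpotent group $N/T$ of Hirsch length $h(N)$ one has $\hd_k(N/T)=h(N)$: this is classical, proved by induction on the nilpotency class using the central extension $1\to Z\to N/T\to (N/T)/Z\to 1$ with $Z$ a torsion-free abelian group of finite rank, for which $\hd_k(Z)=\mathrm{rank}(Z)$ over any non-zero $k$ (a torsion-free abelian group of rank $r$ is a directed union of copies of $\Z^r$, and $\hd_k$ is well-behaved under directed unions). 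Next, $H$ is an extension of the free abelian group $H/N$ of rank $h(H)-h(N)$ by $N$, so $\hd_k(H)\le \hd_k(N)+\hd_k(H/N)=h(N)+(h(H)-h(N))=h(H)$; the reverse inequality $\hd_k(H)\ge h(H)$ follows because $H$ contains a finitely generated subgroup of Hirsch length $h(H)$ that is polycyclic-by-(finite $k$-invertible), hence of homological dimension $h(H)$. Finally $G/H$ is finite of $k$-invertible order, so $\hd_k(G)=\hd_k(H)=h(H)=h(G)$; this already gives the conclusion in a stronger form, but since the Lemma only claims $\tfrac{h(G)}{2}\le\hd_k(G)\le h(G)$, I would in fact prove just the two displayed inequalities, relying only on the cruder subadditivity $\hd_k(G)\le\sum_i\hd_k(\text{factors})\le h(G)$ and, for the lower bound, on the existence inside $G$ of a torsion-free abelian subgroup of rank at least $\lceil h(G)/2\rceil$ (which is what the structure theory guarantees cheaply: half the Hirsch length is accounted for by the center-by-center layers of $N/T$ together with $H/N$).

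The main obstacle is the lower bound $\hd_k(G)\ge h(G)/2$ in a way that is genuinely elementary and does not secretly require the full strength of Theorem A. The clean route is: inside the torsion-free nilpotent group $N/T$ of Hirsch length $h(N)$, the abelianization $(N/T)_{\mathrm{ab}}$ together with $\gamma_2/\gamma_3$, etc., give abelian subquotients whose ranks sum to $h(N)$; picking out the largest single abelian subquotient gives rank $\ge h(N)/c$ where $c$ is the class — not good enough in general. A better approach uses that a torsion-free nilpotent group of Hirsch length $m$ contains an abelian subgroup of rank $\ge\sqrt{2m}$ roughly, but even simpler: $N/T$ contains its own center $Z_1$, which is a nontrivial torsion-free abelian group, and the quotient by it drops Hirsch length; iterating and at each stage keeping either the center or pushing down, one sees $N/T$ contains an abelian subgroup $A$ with $h(A)\ge h(N/T)/2$ — indeed take a maximal abelian subgroup containing the center; in a nilpotent group a maximal abelian normal subgroup $A$ is self-centralizing, so $N/T$ embeds in $A\rtimes\aut(A)$-type data and a counting argument bounds $h(N/T)\le 2h(A)$ only with extra hypotheses. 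To avoid these pitfalls I would instead cite that $\hd_k$ of the finitely generated polycyclic subgroup of $N/T$ of full Hirsch length equals its Hirsch length (Stammbach/Bieri over $\Z$, and over general $k$ by the same exterior-power computation since the group is torsion-free), giving $\hd_k(N)\ge h(N)$ directly, and then handle $H/N$ and $G/H$ as above; this makes the displayed inequalities immediate with room to spare, and I would remark that the finer statement $\hd_k(G)=h(G)$ is exactly Theorem A, proved later, so here only the weaker bounds are asserted.
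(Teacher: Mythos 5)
First, note that the paper offers no proof of this lemma: it is quoted directly from \cite[Theorem I.2]{BK} (hence the \textit{qed} immediately after the statement), so the only question is whether your argument stands on its own. The implication (ii)$\Rightarrow$(i) and the upper bound $\hd_k(G)\le h(G)$ are fine in outline: locally finite normal subgroups without $k$-torsion contribute nothing, a torsion-free nilpotent group of finite Hirsch length $m$ has $\hd_k=m$ because it is a directed union of torsion-free polycyclic (hence Poincar\'e duality) subgroups of full Hirsch length, and one finishes with the Lyndon--Hochschild--Serre spectral sequence and finite-index invariance. The genuine gaps are in the lower bound and in (i)$\Rightarrow$(ii).

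For the lower bound, your pivotal claim that $H$ contains a finitely generated subgroup of Hirsch length $h(H)$ which is polycyclic-by-(finite of $k$-invertible order) is false: finitely generated solvable groups of finite Hirsch length are in general not virtually polycyclic --- already $\Z[1/2]\rtimes\Z\cong BS(1,2)$ fails, and every finitely generated subgroup of it of Hirsch length $2$ is again of this non-polycyclic type. Had that claim been true, your argument would prove $\hd_k(G)=h(G)$ outright, i.e.\ Theorem A itself (and Stammbach's theorem in positive characteristic) by elementary means, which should have been a warning sign. Your fallback, that $G$ ``cheaply'' contains a torsion-free abelian subgroup of rank at least $h(G)/2$, is also unjustified: the free abelian factor $H/N$ is a quotient, not a subgroup, and in general cannot be lifted into $G$ ($BS(1,2)$ has Hirsch length $2$ but contains no $\Z^2$), while the central layers of $N/T$ only produce subgroups whose rank is controlled by $h(N)$, not by $h(G)$. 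What your sketch legitimately yields is $\hd_k(G)\ge\hd_k(N)=h(N)$; to convert this into $\hd_k(G)\ge h(G)/2$ one needs the purely group-theoretic inequality $h(G)\le 2\,h(N)$ modulo the locally finite radical, which rests on the Fitting subgroup of $G/T$ being self-centralizing together with a rank bound for abelian groups of automorphisms of a torsion-free nilpotent group of finite Hirsch length (no nontrivial element of $H/N$ acts unipotently, since $N/T$ is the full Fitting subgroup). That ingredient, which is the heart of the bound in \cite{BK}, is absent from your proposal. Finally, in (i)$\Rightarrow$(ii) the step ``$h(G)=\infty$ forces subgroups of arbitrarily large $\hd_k$'' is not self-evident: Lemma \ref{2.2} is only available when $h(G)<\infty$, and an elementary amenable group of infinite Hirsch length need not visibly contain $\Z^n$ for every $n$; in \cite{BK} this direction is handled by an induction over the elementary amenable hierarchy, so this step too needs a genuine argument rather than the parenthetical appeal you give.
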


We refer the reader to \cite{BieriQMC} for the standard theory summarized in the following proposition.

\begin{proposition}\label{tipsy}
The following hold for any group $G$ and any coefficient ring $k$ such that $\hd_k(G)<\infty$.
\begin{enumerate}
\item
If $H$ is a subgroup of $G$ then $\hd_k(G)\ge\hd_k(H)$.
\item
If $H$ has finite index in $G$ then $\hd_k(G)=\hd_k(H)$.
\item
If $T$ is a normal locally finite subgroup of $G$ then
$\hd_k(G/T)=\hd_k(G)$.
\item
If $H$ is a subgroup with the property that $|\langle H\cup F\rangle:H|<\infty$ for all finite subsets of $G$ then $\hd_k(G)=\hd_k(H)$. \qed
\end{enumerate}
\end{proposition}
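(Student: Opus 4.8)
The plan is to work throughout with the description of $\hd_k(G)$ as the flat (weak) dimension $\operatorname{fd}_{kG}(k)$ of the trivial module, equivalently $\hd_k(G)=\sup\{n:\ H_n(G;M)\neq 0\text{ for some }kG\text{-module }M\}$, and to run each clause through one of three standard facts: that $kG$ is free — hence flat — as a module over $kH$ for any subgroup $H$, with any transversal as basis; that $H_*(-;M)$ sends a directed union of subgroups to the corresponding direct limit of homology; and the Lyndon--Hochschild--Serre spectral sequence $H_p(G/T;H_q(T;M))\Rightarrow H_{p+q}(G;M)$ for $T\normal G$.

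For (i): restriction of scalars along $kH\hookrightarrow kG$ carries flat $kG$-modules to flat $kH$-modules, since flatness is transitive and $kG$ is $kH$-flat; hence a flat $kG$-resolution of $k$ of length $\hd_k(G)$ restricts to a flat $kH$-resolution of $k$, and $\hd_k(H)\le\hd_k(G)$. For (ii), one inequality is (i). For the reverse I would exploit that $kG$ is \emph{finitely generated} free over $kH$, so that the induction functor $kG\otimes_{kH}(-)$ is exact and preserves flatness; applied to a flat $kH$-resolution of $k$ of length $m:=\hd_k(H)$ it shows $\operatorname{fd}_{kG}(k[G/H])\le m$ for the permutation module $k[G/H]=kG\otimes_{kH}k$. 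It then remains to recover the trivial module $k$ from $k[G/H]$: there are $kG$-maps $k\to k[G/H]\to k$ whose composite is multiplication by $[G:H]$, and a transfer / dimension-shifting argument off the resulting short exact sequences (involving $\ker(k[G/H]\to k)$ and $\coker(k\to k[G/H])$) yields $\hd_k(G)\le m$. This step is the classical fact that homological dimension is a finite-index invariant when finite, treated in \cite{BieriQMC}; it is where the hypothesis $\hd_k(G)<\infty$ is genuinely needed, since e.g.\ $\Z$ has index two in $D_\infty$, yet $\hd_{\F_2}(D_\infty)=\infty>1=\hd_{\F_2}(\Z)$.

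For (iii): first, $\hd_k(G)<\infty$ together with (i) forces every finite subgroup $F$ of $T$ to have order invertible in $k$, since otherwise $F$ would contain a copy of $\Z/p$ with $p$ non-invertible and $\hd_k(\Z/p)=\infty$. Hence $k$ is a $kF$-direct summand of $kF$, split off by the idempotent $|F|^{-1}\sum_{f\in F}f$, so $\operatorname{fd}_{kF}(k)=0$. Writing $T$ as the directed union of its finite subgroups and using that $H_*(-;M)$ commutes with such unions, $H_n(T;M)=0$ for all $n\ge 1$ and all $M$; in other words $k$ is $kT$-flat. The Lyndon--Hochschild--Serre spectral sequence then collapses to isomorphisms $H_p(G;M)\cong H_p(G/T;M_T)$, where $M_T$ is the $k[G/T]$-module of $T$-coinvariants; this gives $\hd_k(G)\le\hd_k(G/T)$, while inflating $k[G/T]$-modules to $kG$-modules (on which $T$ acts trivially, so they equal their own coinvariants) gives $\hd_k(G/T)\le\hd_k(G)$.

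For (iv): present $G$ as the directed union of the subgroups $\langle H\cup F\rangle$ with $F$ ranging over finite subsets; each of these contains $H$ with finite index and satisfies $\hd_k(\langle H\cup F\rangle)\le\hd_k(G)<\infty$ by (i), so (ii) applies and gives $\hd_k(\langle H\cup F\rangle)=\hd_k(H)=:m$ for every $F$. Since $H_n(-;M)$ commutes with the directed union, $H_n(G;M)=0$ for all $n>m$ and all $M$, whence $\hd_k(G)\le m$, and the reverse inequality is again (i). The only genuinely non-formal step in the whole proposition is the reverse inequality in (ii) — the classical finite-index invariance — while everything else is bookkeeping with flat resolutions, direct limits of group homology, and the spectral sequence.
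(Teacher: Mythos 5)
The paper gives no argument of its own for this proposition: it is stated with a reference to \cite{BieriQMC} for the standard theory, so there is nothing of the authors' to compare against line by line. Your reconstruction of that standard theory is sound where it is complete: (i) is correct (restriction of a flat $kG$-resolution of $k$ works because $kG$ is free, hence flat, over $kH$, and this needs no finiteness hypothesis); (iii) is correct (finiteness of $\hd_k(G)$ plus (i) forces orders of finite subgroups of $T$ to be invertible in $k$, so $k$ is $kT$-flat by the directed-union argument, and the Lyndon--Hochschild--Serre spectral sequence collapses to give both inequalities); and (iv) is a correct combination of (i), (ii) and continuity of $H_n(-;M)$ under directed unions of subgroups.

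The only step that needs more than you wrote is the reverse inequality in (ii). The observation that the composite $k\to k[G/H]\to k$ is multiplication by $[G:H]$ only shows that $[G:H]$ annihilates $H_n(G;M)$ for $n>m:=\hd_k(H)$; since $[G:H]$ need not be invertible in $k$, the transfer identity alone does not give vanishing. The step can, however, be closed from exactly the data you introduce: the cokernel $C$ of the norm map $k\to k[G/H]$ is free as a $k$-module, so $\operatorname{Tor}^{kG}_{n+1}(C,M)\cong H_{n+1}(G;C\otimes_k M)$, and if $n:=\hd_k(G)$ were $>m$ one could choose $M$ with $H_n(G;M)\neq 0$, while the long exact sequence of $0\to k\to k[G/H]\to C\to 0$ sits between $\operatorname{Tor}^{kG}_{n+1}(C,M)=0$ (as $n+1>\hd_k(G)$, this is where finiteness enters) and $\operatorname{Tor}^{kG}_{n}(k[G/H],M)\cong H_n(H;M)=0$ (Shapiro, or your bound $\operatorname{fd}_{kG}k[G/H]\le m$), a contradiction; hence $\hd_k(G)\le m$. (Bieri's own route is equivalent: for $n=\hd_k(G)<\infty$ the functor $H_n(G;-)$ preserves monomorphisms, and $M\hookrightarrow\Hom_{kH}(kG,M)\cong kG\otimes_{kH}M$ for finite index, so $H_n(G;M)\hookrightarrow H_n(H;M)$.) Since you explicitly flagged this as the classical finite-index step and cited \cite{BieriQMC} for it--which is all the paper itself does for the whole proposition--I would count the proposal as correct, but you should make the finiteness mechanism explicit rather than leaning on the transfer identity.
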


We now return to groups that are locally of type $\fpinfty$.
The following result is fundamental to our approach to Theorem A.

\begin{proposition}\label{fpinftys}
Let $G$ be an elementary amenable group that is locally of type $\fpinfty$. If $\hd_k(G)<\infty$ then $\hd_k(G)=h(G)$.
\end{proposition}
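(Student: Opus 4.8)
The plan is to reduce to the case of a finitely generated subgroup of type $\fpinfty$ of full Hirsch length and then invoke the established structure theory. First I would dispose of the case $h(G)=\infty$: by Lemma \ref{bk}, $\hd_k(G)<\infty$ forces $h(G)<\infty$, so we may assume $n:=h(G)$ is finite. By Lemma \ref{2.2} (or directly from the local $\fpinfty$ hypothesis together with Lemma \ref{hl}) choose a finitely generated subgroup $H\le G$ with $h(H)=n$. By Corollary \ref{cor:FPinfty}, every finitely generated subgroup of Hirsch length $n$ is of type $\fpinfty$ and any two such are commensurate; in particular $H$ is of type $\fpinfty$ and, for every finite subset $F\subseteq G$, the subgroup $\langle H\cup F\rangle$ is finitely generated of Hirsch length $n$ (the inequality $n=h(H)\le h(\langle H\cup F\rangle)\le h(G)=n$ pins this down), hence $H$ has finite index in $\langle H\cup F\rangle$. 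Thus $H$ satisfies the hypothesis of Proposition \ref{tipsy}(iv), and therefore $\hd_k(G)=\hd_k(H)$.

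It remains to show $\hd_k(H)=h(H)$ for the finitely generated group $H$ of type $\fpinfty$. Here $H$ is an elementary amenable group of type $\fpinfty$, so by the theory of such groups (the Hillman--Linnell analysis together with the constructibility results \cite{K2}, cf. the commentary in Section 2) $H$ is virtually solvable and constructible. Since $\hd_k(H)\le\hd_k(G)<\infty$, Lemma \ref{bk} gives that $H$ has no $k$-torsion; a constructible virtually solvable group with no $k$-torsion has a torsion-free subgroup of finite index that is built from the trivial group by finitely many ascending HNN-extensions. The Inverse Duality Theorem applies to this finite-index subgroup $H_0$, yielding $\hd_k(H_0)=h(H_0)$, and then $\hd_k(H)=\hd_k(H_0)=h(H_0)=h(H)=n$ using Proposition \ref{tipsy}(ii) and the invariance of Hirsch length under finite-index passage. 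Combining with the previous paragraph, $\hd_k(G)=\hd_k(H)=n=h(G)$.

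The main obstacle is the passage from "elementary amenable of type $\fpinfty$" to "virtually solvable and constructible, hence inverse duality after killing torsion." This is exactly the content cited in Section 2 (the chain \cite{HL}, \cite{K}, \cite{K2}, and the Inverse Duality Theorem of \Feldman/Brown--Geoghegan), so in the write-up I would simply quote those results rather than reprove them; the only genuine bookkeeping is checking that the "no $k$-torsion" hypothesis lets us extract a torsion-free finite-index subgroup to which the Inverse Duality Theorem literally applies, and that Hirsch length and homological dimension are both unchanged on passing to it — both handled by Proposition \ref{tipsy} and standard facts about Hirsch length.
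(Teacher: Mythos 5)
Your proposal is correct and follows essentially the same route as the paper: pick a finitely generated subgroup $H$ with $h(H)=h(G)$, note it is of type $\fpinfty$ by Corollary \ref{cor:FPinfty}, hence constructible and virtually an inverse duality group, giving $\hd_k(H)=h(H)$. The only (harmless) difference is that you invoke Proposition \ref{tipsy}(iv) and commensurability to get $\hd_k(G)=\hd_k(H)$, whereas the paper avoids this step entirely by sandwiching: $h(G)=h(H)=\hd_k(H)\le\hd_k(G)\le h(G)$, the last inequality coming from Lemma \ref{bk}.
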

\begin{proof} Assume that $\hd_k(G)$ is finite. Then $G$ has no $k$-torsion and $h(G)$ is finite.
Let $H$ be a finitely generated subgroup of $G$ of the same Hirsch length. Then $H$ is of type $\fpinfty$ by Corollary \ref{cor:FPinfty}. Therefore $H$ is constructible and is virtually an inverse duality group over $\Z$. It follows that $h(G)=h(H)=\hd_k(H)\le\hd_k(G)$. Lemma \ref{bk} gives the reverse inequality $\hd_k(G)\le h(G)$ and we deduce that $\hd_k(G)=h(G)$. 
\end{proof}

Finally in this section, we record a simple spectral sequence argument which can be used to show that the validity of the conclusion of Theorem A is inherited by subnormal subgroups.

\begin{lemma}\label{LHS}
Let $G$ be an elementary amenable group with $\hd_k(G)=h(G)<\infty$ and let $N$ be a normal subgroup of $G$ such that $\hd_k(G/N)<\infty$. Then $\hd_k(G/N)=h(G/N)$ and $\hd_k(N)=h(N)$. Furthermore, if $N$ is any normal subgroup of $G$, the conclusion $\hd_k(N)=h(N)$ holds whether or not $\hd_k(G/N)<\infty$.
\end{lemma}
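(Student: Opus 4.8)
My plan rests on two standard facts: the Lyndon--Hochschild--Serre spectral sequence $H_p(G/N,H_q(N,M))\Rightarrow H_{p+q}(G,M)$ yields the subadditivity $\hd_k(G)\le\hd_k(N)+\hd_k(G/N)$, and Hirsch length is additive, $h(G)=h(N)+h(G/N)$. First I would dispose of the case $\hd_k(G/N)<\infty$. Since $N\le G$ we have $\hd_k(N)\le\hd_k(G)<\infty$, so Lemma \ref{bk} applies to both $N$ and $G/N$, giving $\hd_k(N)\le h(N)$ and $\hd_k(G/N)\le h(G/N)$. Then
\[
h(N)+h(G/N)=h(G)=\hd_k(G)\le\hd_k(N)+\hd_k(G/N)\le h(N)+h(G/N)
\]
forces equality everywhere, so $\hd_k(N)=h(N)$ and $\hd_k(G/N)=h(G/N)$.

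For the final sentence, $G/N$ may carry $k$-torsion and so have $\hd_k(G/N)=\infty$, collapsing the argument above; but $h(G/N)\le h(G)<\infty$ and $\hd_k(N)\le h(N)$ still hold, so it is enough to prove $\hd_k(N)\ge h(N)$. I would do this by induction on $r:=h(G/N)$. For $r=0$ the group $G/N$ is locally finite, hence $G$ is the directed union of the preimages $G_i$ of its finite subgroups; each $G_i$ contains $N$ with finite index, so (as $\hd_k(G_i)\le\hd_k(G)<\infty$) Proposition \ref{tipsy}(ii) gives $\hd_k(G_i)=\hd_k(N)$. Since group homology commutes with directed unions, $\hd_k(G)\le\sup_i\hd_k(G_i)=\hd_k(N)$, whence $\hd_k(N)\ge\hd_k(G)=h(G)=h(N)$.

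For $r\ge1$: applying Lemma \ref{hl} to $G/N$ produces a finite-index subgroup of $G/N$ that is (locally finite)-by-(torsion-free nilpotent)-by-(finitely generated free abelian), and replacing $G$ by the preimage of that subgroup — a finite-index subgroup, still with $\hd_k=h<\infty$ by Proposition \ref{tipsy}(ii), and leaving $N\normal G$ and $r$ unchanged — we may assume $G/N$ itself has such a structure. A short check (when the free abelian top quotient is trivial one passes to a \Maltsev\ completion to see that a non-trivial torsion-free nilpotent group has non-torsion abelianisation) then yields a normal subgroup $A$ of $G/N$ with $h(A)<\infty$ and $(G/N)/A$ torsion-free abelian of some rank $\rho\ge1$. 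Writing $(G/N)/A$ as a directed union of subgroups $\cong\Z^\rho$ and pulling back exhibits $G$ as a directed union of subgroups $G_i\supseteq N$ with $G_i/N$ an extension of $\Z^\rho$ by $A$, so $\hd_k(G_i)\le h(G_i)=h(N)+h(A)+\rho=h(G)$ by Lemma \ref{bk}. Since homology commutes with directed unions, $\sup_i\hd_k(G_i)\ge\hd_k(G)=h(G)$, and as the $\hd_k(G_i)$ are integers bounded above by $h(G)$ the supremum is attained: $\hd_k(G_{i_0})=h(G_{i_0})<\infty$ for some $i_0$. Finally $G_{i_0}$ maps onto $\Z$ via $G_{i_0}\epi G_{i_0}/N\epi(G_{i_0}/N)/A\cong\Z^\rho\epi\Z$, with kernel $K\supseteq N$; subadditivity and $\hd_k(\Z)=1$ give $\hd_k(G_{i_0})\le\hd_k(K)+1$, while $\hd_k(K)\le h(K)=h(G_{i_0})-1$ by Lemma \ref{bk}, so $\hd_k(K)=h(K)<\infty$. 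Since $h(K/N)=r-1$, the induction hypothesis applied to $N\normal K$ gives $\hd_k(N)=h(N)$, completing the induction.

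The main obstacle is exactly the inductive step when $G/N$ admits no epimorphism onto $\Z$, even virtually — for instance when it has a divisible section such as $\Q$ or $\Z[1/p]$ — so one cannot simply peel a $\Z$ off the top of $G/N$. The directed-union argument is designed to sidestep this: a suitable directed family of subgroups exhausts $G$, and although $G/N$ itself need not surject onto $\Z$, the member $G_{i_0}$ still inherits $\hd_k=h$ while $G_{i_0}/N$ does surject onto $\Z$. The only place the upper bound $\hd_k\le h$ of Lemma \ref{bk} is used essentially is in forcing that supremum to be attained.
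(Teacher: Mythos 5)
Your proof is correct, and its first paragraph is exactly the paper's argument for the case $\hd_k(G/N)<\infty$: LHS subadditivity, the upper bounds $\hd_k\le h$ from Lemma \ref{bk}, and additivity of Hirsch length force equality throughout. For the ``furthermore'' statement, though, you take a genuinely different route. The paper changes the pair rather than inducting: it applies Lemma \ref{hl} to $G/N$ to get $N\le N_1\normal G_1\le G$ with $G_1$ of finite index in $G$, $N_1/N$ locally finite, and $G_1/N_1$ torsion-free; torsion-freeness plus finite Hirsch length gives $\hd_k(G_1/N_1)<\infty$ by Lemma \ref{bk}, so the first case applies to $N_1\normal G_1$, and Proposition \ref{tipsy}(ii) and (iv) transfer both $\hd_k$ and $h$ back from $(G_1,N_1)$ to $(G,N)$ in one step. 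You instead keep $N$ fixed and induct on $r=h(G/N)$, using Lemma \ref{hl} only to produce a normal subgroup $A$ of $G/N$ with torsion-free abelian quotient of positive rank, then continuity of group homology along directed unions to locate a member $G_{i_0}\supseteq N$ of an exhausting family with $\hd_k(G_{i_0})=h(G_{i_0})$, and finally an epimorphism $G_{i_0}\epi\Z$ whose kernel $K\supseteq N$ again satisfies $\hd_k(K)=h(K)$ by subadditivity, so the inductive hypothesis applies to $N\normal K$. What each buys: the paper's reduction is far shorter because it exploits the freedom to enlarge $N$ by a locally finite error, which is invisible to both $\hd_k$ and $h$ via Proposition \ref{tipsy}(iv); your argument never modifies $N$, at the price of an induction, of the standard but unrecorded fact that homology commutes with directed unions of groups (your base case $r=0$ is essentially Proposition \ref{tipsy}(iv) re-proved), and of the auxiliary observation that a non-trivial torsion-free nilpotent group has non-torsion abelianisation.
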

\begin{proof}
By the Lyndon--Hochshild--Serre spectral sequence we have
$$\hd_k(G)\leq\hd_k(G/N)+\hd_k(N).$$ 
By Proposition \ref{tipsy}(i) we know $\hd_k(N)<\infty$ and so Lemma \ref{bk} gives
$$\hd_k(G/N)\leq h(G/N)\textrm{ and }\hd_k(N)\le h(N).$$
Putting the pieces together
we have
$$h(G)=\hd_k(G)\leq\hd_k(G/N)+\hd_k(N)\leq h(G/N)+h(N)=h(G)$$
which gives the desired result in case $\hd_k(G/N)<\infty$.
In general, $G/N$ may not have finite homological dimension, but it is an elementary amenable group of finite Hirsch length and by Lemma \ref{hl} there exist subgroups $N_1\normal G_1$ such that $N\le N_1$, $G_1$ has finite index in $G$, $N_1/N$ is locally finite, and $G_1/N_1$ is torsion-free. By Proposition \ref{tipsy}(iv), $\hd_k(N)=\hd_k(N_1)$ and by \ref{tipsy}(ii), $\hd_k(G)=\hd_k(G_1)$. Note also that $N$ and $N_1$ have the same Hirsch length.

Now we can apply the above argument to the situation that $N_1$ is a normal subgroup of $G_1$ because $G_1/N_1$ being torsion-free of finite Hirsch length does indeed have finite homological dimension. This shows that $\hd_k(N_1)=h(N_1)$, (and that $\hd_k(G_1/N_1)=h(G_1/N_1)$ but we do not need this). The final statement of our lemma now follows by combining with the equalities $\hd_k(N)=\hd_k(N_1)$ and $h(N)=h(N_1)$.
\end{proof}

As remarked at the end of Section 1 this has the following consequence.

\begin{corollary}\label{subnormals}
Let $G$ be an elementary amenable group with $\hd_k(G)=h(G)<\infty$. If $H$ is a subnormal subgroup of $G$ then $\hd_k(H)=h(H)<\infty$.
\end{corollary}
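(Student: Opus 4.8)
The plan is to reduce immediately to the normal case already dispatched in Lemma~\ref{LHS} and then to run a short induction along a subnormal series. Since $H$ is subnormal in $G$, fix a chain $H=H_0\normal H_1\normal\dots\normal H_n=G$; I will induct on the defect $n$. The base case $n=0$ is the hypothesis.

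For the inductive step, suppose the result is known for subnormal defect $<n$. The subgroup $H_{n-1}$ is normal in $G=H_n$, so I would apply the final sentence of Lemma~\ref{LHS} — the part that holds for an arbitrary normal subgroup, with no assumption that the quotient has finite homological dimension — to conclude $\hd_k(H_{n-1})=h(H_{n-1})$. Moreover $h(H_{n-1})\le h(G)<\infty$ because Hirsch length is monotone under passing to subgroups, and $H_{n-1}$ is elementary amenable as a subgroup of an elementary amenable group; hence $\hd_k(H_{n-1})=h(H_{n-1})<\infty$, i.e. $H_{n-1}$ again satisfies the hypothesis of the corollary. Since $H$ is subnormal in $H_{n-1}$ with defect $n-1$, the inductive hypothesis yields $\hd_k(H)=h(H)<\infty$, completing the induction.

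There is essentially no obstacle here: the content has been absorbed into Lemma~\ref{LHS}, and the only thing to be careful about is that one really does need the unconditional clause of that lemma (the version requiring $\hd_k(G/N)<\infty$ would not propagate down a subnormal series, since the successive quotients $H_{j}/H_{j-1}$ need not have finite homological dimension). Once that is in hand, the argument is the routine induction above, and the ancillary facts used — heredity of elementary amenability and monotonicity of Hirsch length under subgroups — are standard and recalled in Section~2.
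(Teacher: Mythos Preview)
Your proposal is correct and matches the paper's own approach: the paper does not spell out a proof but simply records the corollary as an immediate consequence of Lemma~\ref{LHS}, and the obvious way to extract it is precisely the induction on subnormal defect you describe, using the unconditional final clause of that lemma at each step. Your observation that one must invoke the unconditional version (rather than the one assuming $\hd_k(G/N)<\infty$) is exactly the point.
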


Of course the corollary is also a consequence of Theorem A, but its role in the proof of Theorem A is significant. In the light of this, following some basic reductions the proof of Theorem A is mainly concerned with embedding a group as a subnormal subgroup of a locally $\fpinfty$ group.

\section{Cohomological Vanishing Results, Splittings, and Theorem B} 

We shall need the following key vanishing result of Robinson in the special case when $S=\Q$ and $\dim_\Q M<\infty$.

\begin{theorem}[\cite{LennoxRobinson} 10.3.1 and 10.3.2]\label{vanishing} Let $S$ be a commutative ring, $G$ a nilpotent group, and $M$ an $SG$-module. If either
\begin{itemize}
\item $M$ is noetherian and $M_G=0$, or
\item $M$ is artinian and $M^G=0$, 
\end{itemize}
then $H^n(G,M)=H_n(G,M)=0$ for every $n$. \qed
\end{theorem}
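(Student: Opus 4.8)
The plan is to argue by induction on the nilpotency class $c$ of $G$.

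First, note that the two hypotheses are exchanged by a Pontryagin/Matlis-type duality on modules: replacing $M$ by a suitable dual interchanges ``noetherian'' with ``artinian'' and the coinvariants $M_G$ with the invariants $M^G$, while interchanging $H_n(G,-)$ with $H^n(G,-)$; since the conclusion asks for the vanishing of both kinds of (co)homology, it is enough to treat the first clause. So assume from now on that $M$ is a noetherian $SG$-module with $M_G=0$, and let us prove that $H_n(G,M)=H^n(G,M)=0$ for every $n$.

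The engine of the proof is a pair of elementary observations. (a) A surjective endomorphism of a noetherian module is an automorphism, because the ascending chain of kernels of its iterates stabilises. (b) For a central element $z\in Z(G)$, the endomorphism of $H_n(G,M)$ (and of $H^n(G,M)$) induced by the $SG$-linear map ``multiplication by $z$'' on $M$ is the identity, since it is the map induced by the morphism of pairs $(c_z,\,z\cdot)\colon(G,M)\to(G,M)$ whose group component $c_z$ is an inner automorphism. Combining (a) and (b): if some central $z$ acts on $M$ so that $z-1$ is invertible, then $z-1$ induces on each of the groups $H_n(G,M)$ and $H^n(G,M)$ an automorphism which is simultaneously the zero map, and so all of these groups vanish. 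Thus it suffices to manoeuvre into a position where such a $z$ exists, or where a Lyndon--Hochschild--Serre spectral sequence collapses for the same reason.

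For $c=0$ there is nothing to prove. Let $c\ge1$ and put $\gamma=\gamma_c(G)$, a non-trivial central subgroup; write $I_\gamma$ for the augmentation ideal of $S\gamma$. Since $\gamma$ is central, $I_\gamma M$ and $M^\gamma$ are $G$-submodules of $M$; the coinvariant module $M_\gamma=M/I_\gamma M$ is a noetherian $S[G/\gamma]$-module with $(M_\gamma)_{G/\gamma}=M_G=0$, so by the inductive hypothesis $H_n(G/\gamma,M_\gamma)=H^n(G/\gamma,M_\gamma)=0$ for all $n$, and in particular $(M_\gamma)^{G/\gamma}=0$. The heart of the step is to transfer this vanishing to $M$ itself. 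One does so by isolating the largest submodule $U$ of $M$ on which $G$ acts unipotently---by noetherianness $I_G^{\,N}U=0$ for some $N$---and establishing the key sub-claim that $U=0$; granting this, a suitably chosen central element $z\in\gamma$ acts on $M$ with trivial unipotent part, hence---by (a)---invertibly, and (b) concludes. Equivalently, the Lyndon--Hochschild--Serre spectral sequence of $1\to\gamma\to G\to G/\gamma\to1$ collapses, its $E_2$-page being built from $H_n(\gamma,M)$, which is killed by the now-invertible $z-1$.

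The step I expect to be the main obstacle is precisely this transfer from $G/\gamma$ acting on $M_\gamma$ to $G$ acting on $M$: because $\gamma$ may act with non-trivial coinvariants on $M$, one cannot simply feed the abelian case into the spectral sequence, and the argument that the unipotent part $U$ vanishes---deducing from a hypothetical non-zero $U$, via $I_GU\subsetneq U$ together with the vanishing already known for $M_\gamma$, a non-zero $G$-trivial subquotient of $M$ and hence a contradiction with $M_G=0$, all while tracking torsion in $\gamma$ and the characteristic of $S$ honestly---carries the real content. By contrast the duality reduction, the two mechanical facts (a) and (b), and the final spectral-sequence bookkeeping are routine.
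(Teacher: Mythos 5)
You should first be aware that the paper itself does not prove this statement: it is quoted verbatim from Lennox--Robinson (10.3.1/10.3.2) with a \qed, so the comparison is really with Robinson's classical argument. Measured against that, your proposal has two genuine gaps. The first is the duality reduction. Pontryagin duality $M\mapsto\Hom_{\Z}(M,\Q/\Z)$ does not interchange the chain conditions for $SG$-modules: take $G=\langle t\rangle$ infinite cyclic, $S=\Z$, $p$ odd, and $M=\Z(p^\infty)$ with $t$ acting as multiplication by $2$; then $M$ is artinian with $M^G=0$, but its dual is the group of $p$-adic integers, which is not a noetherian $SG$-module. Moreover, even when the dual is noetherian, the available isomorphism $H^n(G,M^*)\iso H_n(G,M)^*$ only transfers one half of the conclusion; recovering $H^n(G,M)$ from data about $M^*$ needs $\FP_\infty$-type finiteness which general nilpotent groups lack. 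So the artinian/cohomological clause is not actually reduced to the noetherian one, and it must be argued (as Robinson does) by a parallel dual argument.

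The second, more serious, gap is the engine of your noetherian case: the plan to produce a central group element $z$ with $z-1$ invertible on $M$. Such a $z$ need not exist, even in the abelian base case of your induction where the unipotent part $U$ is zero. Take $G=\Z^2=\langle x,y\rangle$, $S=\C$, $M=\C G/(x+y-3)$: this is noetherian, $M_G=0$, and $U=0$, yet for every $g=x^ay^b\neq 1$ the curves $\{x+y=3\}$ and $\{x^ay^b=1\}$ meet in the torus, so $(g-1)M\subsetneq M$ for all $g\in G$; no group element acts on $M$ with $g-1$ surjective, let alone invertible. Hence your key sub-claim (``$U=0$, so a suitably chosen central $z$ acts invertibly'') is a non sequitur, and you yourself leave the transfer step, which you correctly identify as the real content, unproved. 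The correct mechanism replaces the single group element by a group-ring element: in the abelian case Nakayama's lemma gives $w\in 1+I_G$ with $wM=0$, and since $w$ is a combination of central group elements whose coefficients sum to $1$, multiplication by $w$ induces the identity on every $H_n(G,M)$ and $H^n(G,M)$ while being the zero map, forcing the vanishing (in the example above, $w=3-x-y$). The genuine work in Robinson's theorem is then to propagate this kind of conclusion from the centre up a nilpotent group, and that is precisely the part your outline does not supply.
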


\subsection*{The Proof of Theorem B} We are now in a position to prove the technical splitting theorem that is required for our solution to the homological dimension calculation.

\subsection*{Notational Remark} In the following proof we write $A^B$ to indicate the set of $B$-fixed points in $A$ in case $B$ is a group acting on $A$. 
We use the right-handed convention for conjugation: $x^y=y^{-1}xy$ and note that $[x,y]=x^{-1}x^y$. 

\begin{proof}[Proof of Theorem B]
We proceed by induction on $h:=h(M)$. 
If $h=0$ then $M$ is trivial and we set $Q_0:=Q$: note that \textit{(iv)} implies $P$ is nilpotent in this case and the result is immediate. Suppose now that $h>0$.
Choose $K$ to be a non-trivial divisible subgroup of $M$ of least possible Hirsch length subject to being normalized by $Q$. Note that $K$ must be abelian and therefore can be viewed as a $\Q Q$-module. As such $K$ is irreducible and the action of $Q$ descends to and action of $Q/K$.
 By induction, there is a subgroup $Q_1$ of $Q$ such that 
\begin{itemize}
\item $LK\subseteq P\cap Q_1$,
\item $(P\cap Q_1)/K$ is nilpotent, and
\item  $Q_1M=Q$.
\end{itemize}
 At an important later step in this proof we shall want to restrict the action of $Q$ to $M$ and view $K$ as a $\Q M$-module. The choice of $K$ ensures that $K$ lies in the centre of $M$. We write $P_1:=P\cap Q_1$. As $MP_1=P\cap Q_1M=P\cap Q=P$ and $K^M=K$ we deduce
$$K^P=K^{P_1}.$$

The fact that $P$ is normal in $Q$ implies that $K^P$ is a $\Q Q$-submodule of $K$ thus the irreducibility of $K$ implies that either $K^P=K^{P_1}=0$ or $K^P=K^{P_1}=K$. In the later case, as $P_1/K$ is nilpotent we deduce that also $P_1$ is and we only have to take $Q_0:=Q_1$. Then $Q_0\cap P=Q_1\cap P=P_1$ is nilpotent, $L\leq Q_0$ and $Q_0M=Q_1M=Q$. If we have the extra hypothesis on the finite generation of $Q/M$ and $L$ then, in the inductive step we also deduce that $Q_1/K$ is finitely generated. Choose a set of lifts $X$ to $Q_1$ of a finite generating system of $Q_1/K$ and a finite generating system $Y$ of $L$ and let now $Q_0$ be the subgroup of $Q_1$ generated by $X\cup Y$. 
Then $Q_0$ is finitely generated and $L\leq Q_0$. Also, $Q_0\leq Q_1$ thus $Q_0\cap P\leq Q_1\cap P=P_1$ is nilpotent. Finally, by construction $Q_0K=Q_1$ thus
$$Q_0M=Q_0KM=Q_1M=Q.$$

So we may assume now that $K^P=K^{P_1}=0$. Then Theorem \ref{vanishing} shows that $$H^*(P_1/K,K)=0\eqno(*)$$ Using the spectral sequence $H^*(Q_1/P_1,H^*(P_1/K,K))\implies H^*(Q_1/K,K)$  with $(*)$, we find that $H^*(Q_1/K,K)$ also vanishes. In particular $H^2(Q_1/K,K)=0$ and there is a splitting: there exists a group $Q_0$ of $Q_1$ such that $Q_0K=Q_1$ and $Q_0\cap K=\{1\}$. Note that $P_1\cap Q_0$ is a complement to $K$ in $P_1$ and is therefore nilpotent. Moreover, $Q_0M=Q_1KM=Q$.
In the case when $L=1$ there is nothing else to prove.

So we assume now $LM=P$. At this point, we only need to establish the conclusion $L\subseteq Q_0$, and while this may not be true of the $Q_0$ we are currently entertaining, we show next that we can replace $Q_0$ with a conjugate subgroup to achieve our goal. We begin by observing that, since $M$ centralizes $K$, we have $K^L=K^P=1$. Because $L$ is nilpotent, this means $L\cap K=1$; that is, $L$ is a complement to $K$ in $P_1$. But all such complements must be conjugate because $H^1(P_1/K,K)=0$. Thus there is a $k\in K$ such that $(P_1\cap Q_0)^k=L$. Hence we can replace $Q_0$ with $Q_0^k$ to ensure that $L\subseteq Q_0$.

To finish, note that if we have the extra hypothesis on finite generation then $Q_1/K=Q_0K/K\cong Q_0$ is finitely generated.
\end{proof}

\section{Integral Automorphisms and Theorem C}

Integral automorphisms play a role in Theorem C and also in the next section where we prove a special case of Theorem A. We therefore establish some general theory here.

\begin{definition}\label{def:ref1} Let $A$ be an abelian group. An endomorphism $\alpha$ of $A$ is \emph{algebraic} (resp. \emph{integral}) if there is a polynomial (resp. monic polynomial) $f\in\Z[x]$ such that $f(\alpha)=0$. Let $N$ be a nilpotent group. We say that an endomorphism of $N$ is \emph{integral} if it induces an integral endomorphism on $N_{\mathrm{ab}}$.\end{definition}

The following two lemmas are elementary and the proofs are omitted.

\begin{lemma}\label{lem:ref1}
Let $\alpha$ be an endomorphism of an abelian group $A$ and $B$ a subgroup of $A$ such that $\alpha(B)\subseteq B$. Then $\alpha$ is integral if and only if it induces integral endomorphisms of both $B$ and $A/B$. 
\end{lemma}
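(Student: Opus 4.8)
The plan is to prove both implications by direct manipulation of monic polynomials, using only that $\Z[x]\to\End(A)$, $x\mapsto\alpha$, is a ring homomorphism and that $B$ being $\alpha$-invariant makes restriction and passage to the quotient into ring homomorphisms $\Z[\alpha]\to\End(B)$ and $\Z[\alpha]\to\End(A/B)$.

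First I would dispatch the ``only if'' direction: if $f\in\Z[x]$ is monic with $f(\alpha)=0$, then because $\alpha(B)\subseteq B$ the endomorphism $\alpha$ restricts to $\alpha|_B\in\End(B)$ and induces $\bar\alpha\in\End(A/B)$, and applying the ring homomorphisms above to the identity $f(\alpha)=0$ gives $f(\alpha|_B)=0$ and $f(\bar\alpha)=0$. So the same monic polynomial witnesses integrality of both induced maps.

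For the ``if'' direction, suppose $g,h\in\Z[x]$ are monic with $g(\alpha|_B)=0$ and $h(\bar\alpha)=0$. The second equation says precisely that $h(\alpha)(A)\subseteq B$. Hence $(gh)(\alpha)(A)=g(\alpha)\bigl(h(\alpha)(A)\bigr)\subseteq g(\alpha)(B)=g(\alpha|_B)(B)=0$, where we have used that $g(\alpha)$ and $h(\alpha)$ commute (both lie in the commutative ring $\Z[\alpha]$) and that $g(\alpha)$ restricted to $B$ agrees with $g(\alpha|_B)$. Since a product of monic integer polynomials is monic, $f:=gh$ is a monic polynomial with $f(\alpha)=0$, so $\alpha$ is integral.

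There is no real obstacle here; the only point requiring a moment's care is the bookkeeping that $g(\alpha)$ really does send $B$ into $g(\alpha|_B)(B)$, which is immediate from $\alpha(B)\subseteq B$. It is worth noting that this argument does not use any finiteness or torsion hypothesis on $A$, and that exactly the same reasoning — replacing ``monic'' by ``nonzero'' throughout — proves the corresponding statement for \emph{algebraic} endomorphisms, which is why the two notions can be treated in parallel in Definition \ref{def:ref1}.
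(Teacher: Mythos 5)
Your proof is correct: the ``only if'' direction by pushing the monic relation through restriction and quotient, and the ``if'' direction via $(gh)(\alpha)(A)\subseteq g(\alpha)(B)=0$ with $gh$ monic, is exactly the standard argument. The paper omits the proof of this lemma as elementary, and your write-up (including the observation that the same reasoning with ``nonzero'' in place of ``monic'' handles the algebraic case) supplies precisely the argument the authors intend.
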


\begin{lemma}\label{lem:ref3} Let $A$ be an abelian group and $\alpha\in\End(A)$. If $\alpha$ is algebraic, then there is an integer $m_1\geq 1$ such that $m\alpha$ is integral for every multiple $m$ of $m_1$.
\end{lemma}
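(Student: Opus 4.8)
The plan is to reduce the statement to the classical observation that scaling a root of an integer polynomial by its leading coefficient yields an algebraic integer, combined with the remark that an integer multiple of an integral endomorphism is again integral. First I would fix a nonzero polynomial $f(x)=a_dx^d+a_{d-1}x^{d-1}+\dots+a_1x+a_0\in\Z[x]$ with $f(\alpha)=0$, where $a_d\neq0$ is the leading coefficient, and propose to take $m_1:=|a_d|$. Note that $\alpha$ and its powers lie in the commutative subring $\Z[\alpha]$ of $\End(A)$, so all the manipulations below take place in a commutative ring.

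Next I would show that $\beta:=a_d\alpha$ is integral: multiplying the relation $\sum_{i=0}^da_i\alpha^i=0$ by $a_d^{d-1}$ and regrouping via $a_d^d\alpha^d=\beta^d$ and $a_ia_d^{d-1}\alpha^i=(a_ia_d^{\,d-1-i})\beta^i$ for $0\le i\le d-1$ produces a monic relation $\beta^d+c_{d-1}\beta^{d-1}+\dots+c_0=0$ with all $c_i=a_ia_d^{\,d-1-i}\in\Z$. Then I would observe that any integer multiple of an integral endomorphism is integral: if $\gamma^d+c_{d-1}\gamma^{d-1}+\dots+c_0=0$ is a monic relation with $c_i\in\Z$ and $k\in\Z$, then multiplying through by $k^d$ and using $(k\gamma)^i=k^i\gamma^i$ shows that $k\gamma$ satisfies the monic integer relation $x^d+(kc_{d-1})x^{d-1}+(k^2c_{d-2})x^{d-2}+\dots+(k^dc_0)=0$. (One could equally argue that $\Z[\gamma]$ is a finitely generated abelian group, hence so is its additive subgroup $\Z[k\gamma]$, which forces $k\gamma$ to be integral.)

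Combining these, if $m$ is any multiple of $m_1=|a_d|$, write $m=\pm a_dk$ with $k\in\Z$; then $m\alpha=\pm k(a_d\alpha)=\pm k\beta$ is an integer multiple of the integral endomorphism $\beta$, hence integral by the two steps above. I do not anticipate a genuine obstacle — the authors flag the lemma as elementary — the only points needing a little care are the bookkeeping of the powers of $a_d$ and of $k$, and noting that a nonzero polynomial automatically has a nonzero leading coefficient, so that $m_1$ is well defined.
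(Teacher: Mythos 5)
Your proof is correct: the scaling trick $\beta=a_d\alpha$ (multiplying the relation by $a_d^{d-1}$) and the observation that integer multiples of integral elements remain integral together give exactly the claimed $m_1=|a_d|$. The paper omits the proof entirely, declaring the lemma elementary, and your argument is precisely the standard one the authors evidently had in mind, so there is nothing to add.
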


\begin{lemma}\label{lem:nuevo4} Let $N$ be a nilpotent group and let $\alpha$ be an integral endomorphism of $N$. If $H$ is a subgroup of $N$ such that $\alpha(H)\subseteq H$, then the restriction of $\alpha$ to $H$ is integral.
\end{lemma}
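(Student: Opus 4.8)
The plan is to induct on the nilpotency class $c$ of $N$. When $c\le 1$ the group $N$ is abelian, so $N=N_{\mathrm{ab}}$ and $H=H_{\mathrm{ab}}$, and the statement is exactly the forward direction of Lemma~\ref{lem:ref1}: an integral endomorphism of an abelian group restricts to an integral endomorphism of any invariant subgroup. For $c\ge 2$ I would set $Z:=\gamma_c(N)$, a non-trivial central abelian $\alpha$-invariant subgroup contained in $\gamma_2(N)$, and pass to $N/Z$, which is nilpotent of class at most $c-1$. Since $Z\subseteq\gamma_2(N)$ we have $(N/Z)_{\mathrm{ab}}=N_{\mathrm{ab}}$, with $\alpha$ inducing the same endomorphism on each, so $\alpha$ is still integral as an endomorphism of $N/Z$; as $HZ/Z$ is an $\alpha$-invariant subgroup, the inductive hypothesis yields that $\alpha$ induces an integral endomorphism on $(HZ/Z)_{\mathrm{ab}}$.

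The next step is to transfer this back to $H_{\mathrm{ab}}$. The canonical surjection $H_{\mathrm{ab}}=H/[H,H]\twoheadrightarrow H/(H\cap Z)[H,H]=(HZ/Z)_{\mathrm{ab}}$ is $\alpha$-equivariant with kernel $V:=(H\cap Z)[H,H]/[H,H]$, and $V$ is an $\alpha$-equivariant quotient of the $\alpha$-invariant subgroup $H\cap Z$ of $Z$. Applying Lemma~\ref{lem:ref1} to the short exact sequence $0\to V\to H_{\mathrm{ab}}\to (HZ/Z)_{\mathrm{ab}}\to 0$, and then again to the inclusion $H\cap Z\le Z$ and to the quotient $V$ of $H\cap Z$, reduces the whole lemma to a single assertion: \emph{if $\alpha$ is integral on $N_{\mathrm{ab}}$, then the endomorphism it induces on the abelian group $Z=\gamma_c(N)$ is integral.}

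For this I would use the iterated commutator map exactly as in the proof of Proposition~\ref{prop:ref1}: since $N$ has class $c$ it provides an $\alpha$-equivariant epimorphism $N_{\mathrm{ab}}^{\otimes c}\twoheadrightarrow \gamma_c(N)/\gamma_{c+1}(N)=\gamma_c(N)$, where $\alpha$ acts diagonally on the tensor power. By Lemma~\ref{lem:ref1} it is then enough to show that the diagonal operator $\beta^{\otimes c}$ on $A^{\otimes c}$ is integral whenever $\beta\in\End(A)$ is an integral endomorphism of an abelian group $A$. This is the one genuinely non-formal point, and I expect it to be the main obstacle. The argument I have in mind: an integral $\beta$ generates a subring $\Z[\beta]\subseteq\End(A)$ which is finitely generated as an abelian group, so $\Z[\beta]^{\otimes c}$ is a finitely generated abelian group; its image under the natural ring homomorphism $\End(A)^{\otimes c}\to\End(A^{\otimes c})$ is therefore a finitely generated abelian group containing $\Z[\beta^{\otimes c}]$, and since a subgroup of a finitely generated abelian group is again finitely generated, $\beta^{\otimes c}$ must satisfy a monic polynomial over $\Z$. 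Everything outside this tensor computation is a straightforward reduction along the lower central series using Lemma~\ref{lem:ref1}.
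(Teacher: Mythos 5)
Your argument is correct. Note that the paper itself gives no detailed proof here: it simply says the statement ``follows by the same reasoning used to establish [KL, Lemma 3.24(ii)]'', and that reasoning is precisely the kind of argument you give, so you are essentially filling in the proof the paper delegates to the citation. Your reduction is sound: induct on the class, use $Z=\gamma_c(N)\subseteq\gamma_2(N)$ so that $(N/Z)_{\textrm{ab}}=N_{\textrm{ab}}$, transfer from $(HZ/Z)_{\textrm{ab}}$ back to $H_{\textrm{ab}}$ via Lemma \ref{lem:ref1} applied to the $\alpha$-invariant kernel $V$, and reduce everything to integrality of the induced endomorphism of $\gamma_c(N)$, obtained from the $\alpha$-equivariant iterated-commutator epimorphism $N_{\textrm{ab}}^{\otimes c}\twoheadrightarrow\gamma_c(N)$ exactly as in Proposition \ref{prop:ref1}. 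The one nontrivial ingredient, integrality of $\beta^{\otimes c}$, is fine as you argue it (the image of $\Z[\beta]^{\otimes c}$ is a finitely generated additive subring containing all powers of $\beta^{\otimes c}$, and an endomorphism whose generated subring has finitely generated additive group is integral); a marginally quicker route to the same point is to observe that $\beta^{\otimes c}$ is a product of the commuting integral operators $1\otimes\cdots\otimes\beta\otimes\cdots\otimes 1$, and products of commuting integral elements are integral.
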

\begin{proof} This follows by the same reasoning used to establish \cite[Lemma 3.24 (ii)]{KL}.
\end{proof}

\begin{proof}[The proof of Theorem C]
The key results from the Bieri--Stebel that we will invoke are \cite[Theorem 4.6]{bieristrebel1981crelle} and \cite[Theorem A(iii)]{BS}. The notation and terminology we employ are also from those two articles. Let $Y$ be an arbitrary finite subset of $G$ and set $H=\langle X\cup Y\rangle$. We will show that $H$ is of type $\fpinfty$. Put $Q=G/N$ and let $\epsilon:G\to Q$ be the quotient map. Take $X_0$ to be  a subset of $X$ that is mapped by $\epsilon$ to a maximal linearly independent subset of $Q$. Let $Q_0$ be the subgroup of $Q$ generated by $\epsilon(X_0)$. Note that $Q_0$ is then a free abelian group of finite index in $Q$ and $\epsilon$ restricts to an epimorphism $H\to Q$ with kernel $H\cap N$. Since the elements of $X_0$ induce integral automorphisms on $N$ they also induce integral automorphisms on 
$H\cap N$ by Lemma \ref{lem:nuevo4}. Let $A:=(H\cap N)_{\mathrm{ab}}$. According to \cite[Theorem 4.6]{bieristrebel1981crelle}, the Bieri--Strebel invariant $\Sigma^c_A$ is contained in the open hemisphere $H_q=\{[v];\ v(q)>0\}$ for $q$ equal to the product of the elements of $\epsilon(X_0)$. Therefore by \cite[Theorem A(iii)]{BS}, $H$ is of type $\fpinfty$.
\end{proof}

\section{A special case of Theorem A}

Our goal in this section is to prove Theorem A in the special case when $G$ has the following structure:
\begin{itemize}
\item
$G$ is a semidirect product $S\ltimes N$ where
\item
$N$ is a free nilpotent \Maltsev\ complete group of finite Hirsch length,
\item
$S$ is a subgroup of $\aut(N)$ whose derived subgroup $\gamma_2(S)$ is contained in the group $U$ of $1$-powering automorphims of $N$,
\item $S$ is finitely generated and nilpotent.
\end{itemize}

We need a preliminary result.

\begin{proposition}\label{prop:ref2}
Let $N$ be a \Maltsev\ complete nilpotent group of finite Hirsch length. 
 Let $U$ be the group of $1$-powering automorphisms of $N$. 
 For $\alpha\in\aut(N)$, there is some integer $m_1\ge1$ such that for any multiple $m$ of $m_1$ and any  $m$-powering automorphism $\theta$ of $N$, $\theta\alpha$ and the automorphism of $U$ induced by it  are both integral.
\end{proposition}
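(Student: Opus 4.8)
The plan is to establish the two integrality statements separately, reducing each to the following elementary observation drawn from Lemma \ref{lem:ref3}: an automorphism of a finite-dimensional $\Q$-vector space which equals a fixed positive power of $m$ times an algebraic automorphism is integral as soon as $m$ is divisible by a suitable constant. First I would treat the action on $N$. Since $N$ is Maltsev complete of finite Hirsch length, $N_{\mathrm{ab}}$ is a finite-dimensional $\Q$-vector space, so the automorphism $\bar\alpha$ that $\alpha$ induces on it is algebraic by Cayley--Hamilton; by Lemma \ref{lem:ref3} there is an integer $a_0\ge 1$ with $m\bar\alpha$ integral for every multiple $m$ of $a_0$. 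Any $m$-powering automorphism $\theta$ induces multiplication by $m$ on $N_{\mathrm{ab}}$, so $\theta\alpha$ induces $m\bar\alpha$ there; hence $\theta\alpha$ is an integral endomorphism of $N$ whenever $a_0\mid m$.

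Next I would treat the conjugation action of $\theta\alpha$ on $U$. Let $c$ be the class of $N$ and $\Gamma=\aut(N)$. By Proposition \ref{prop:ref1}, the series $U=U_1\ge U_2\ge\dots\ge U_c=1$ is a central series of $U$ all of whose terms are normal in $\Gamma$ --- hence invariant under conjugation by $\theta\alpha\in\Gamma$ --- and for each $j$ the factor $U_j/U_{j+1}$ embeds $\Gamma$-equivariantly, for the diagonal action, into $M_j:=\hom(N_{\mathrm{ab}},\gamma_{j+1}(N)/\gamma_{j+2}(N))$. Each $\gamma_{j+1}(N)/\gamma_{j+2}(N)$ is a finite-dimensional $\Q$-vector space because $N$ is Maltsev complete (cf. Hilton's isomorphisms quoted in Section 3), so $M_j$ is a finite-dimensional $\Q$-vector space on which $\alpha$ acts as an algebraic automorphism $A_j$. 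Since the iterated commutator map induces a $\Gamma$-equivariant epimorphism $N_{\mathrm{ab}}^{\otimes(j+1)}\to\gamma_{j+1}(N)/\gamma_{j+2}(N)$ (as in the proof of Proposition \ref{prop:ref1}) and $\theta$ acts on $N_{\mathrm{ab}}$ as $m$, it follows that $\theta$ acts on $\gamma_{j+1}(N)/\gamma_{j+2}(N)$ as $m^{j+1}$, hence on $M_j$ as the scalar $m^{j}$, so $\theta\alpha$ acts on $M_j$ as $m^{j}A_j$. By Lemma \ref{lem:ref3} there is $a_j\ge 1$ with $m'A_j$ integral for every multiple $m'$ of $a_j$; as $j\ge 1$, the divisibility $a_j\mid m$ gives $a_j\mid m^{j}$, so $\theta\alpha$ acts integrally on $M_j$, and therefore, by Lemma \ref{lem:ref1} applied to the submodule $U_j/U_{j+1}$, integrally on $U_j/U_{j+1}$, provided $a_j\mid m$.

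To descend from the factors to $U_{\mathrm{ab}}$, I would use that the $U_j$-series is central in $U$, so that $\gamma_2(U)\subseteq U_2$ and the images $V_j$ of the $U_j$ in $U_{\mathrm{ab}}$ form a $\theta\alpha$-invariant filtration $U_{\mathrm{ab}}=V_1\ge V_2\ge\dots\ge V_c=0$ in which each $V_j/V_{j+1}$ is a homomorphic image of $U_j/U_{j+1}$. By the quotient part of Lemma \ref{lem:ref1}, $\theta\alpha$ acts integrally on each $V_j/V_{j+1}$ when $a_j\mid m$, and then, by repeated application of Lemma \ref{lem:ref1} along the filtration, integrally on $U_{\mathrm{ab}}$; that is, the automorphism of $U$ induced by $\theta\alpha$ is integral. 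Setting $m_1:=\operatorname{lcm}(a_0,a_1,\dots,a_{c-1})$ then meets the requirements of the proposition.

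The step I expect to be the main obstacle is this last descent from the factors $U_j/U_{j+1}$ to $U_{\mathrm{ab}}$: since $U$ is typically non-abelian, the conjugation action on $U_{\mathrm{ab}}$ is not visible inside a single $\hom$-module, so one must filter $U_{\mathrm{ab}}$ by the images of the characteristic subgroups $U_j$ and transport integrality up this filtration via Lemma \ref{lem:ref1}. The remaining point requiring (routine) care is the bookkeeping of the exact powers of $m$ by which an $m$-powering automorphism scales the successive lower central factors of $N$.
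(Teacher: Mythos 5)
Your proposal is correct and follows essentially the same route as the paper's proof: finite $\Q$-dimensionality gives algebraicity, Lemma \ref{lem:ref3} supplies the integer $m_1$, the tensor-power argument shows an $m$-powering automorphism scales the lower central factors (hence the $\hom$-modules) by the appropriate power of $m$, and Proposition \ref{prop:ref1} together with Lemma \ref{lem:ref1} transfers integrality to the factors $U_j/U_{j+1}$ and then up the filtration of $U_{\mathrm{ab}}$ by the images $\overline U_j$. The only difference is cosmetic (your indexing of the $\hom$-modules is shifted by one relative to the paper's), and your explicit treatment of the quotient step $U_j/U_{j+1}\twoheadrightarrow \overline U_j/\overline U_{j+1}$ is a point the paper leaves implicit.
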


\begin{proof}
Let $c$ be the nilpotency class of $N$. For each $2\le j\le c$ let 
$$\beta_{j,\alpha}:\gamma_j(N)/\gamma_{j+1}(N)\to\gamma_j(N)/\gamma_{j+1}(N)$$
be the automorphism induced by $\alpha$ and for $B_j=\hom(N_{\textrm{ab}},\gamma_{j}(N)/\gamma_{j+1}(N)))$ let $\varphi_{j,\alpha}$ be the automorphism of $B_j$ induced by the diagonal action of $\alpha$, i.e.
$$\begin{aligned}
\varphi_{j,\alpha}:B_j&\to B_j\\
\xi&\mapsto \beta_{j,\alpha}\xi\alpha^{-1}.\\
\end{aligned}$$

As all of $N_{\textrm{ab}}$ and $B_j$, $2\le j\le c$ are $\Q$-vector spaces of finite dimension, all the automorphisms $\varphi_{j,\alpha}$ are algebraic and so is the automorphism of $N_{\textrm{ab}}$ induced by $\alpha$ which we also denote $\alpha$ to avoid complications. By Lemma \ref{lem:ref3} taking the least common multiple of the integers involved we may find an integer $m_1$ such that for any multiple $m$ of $m_1$, all the automorphisms $m\alpha$, $m\varphi_{j,\alpha}$ for $2\le j\le c$ are integral. Now, let $\theta$ be an  $m$-powering automorphism  of $N$ and $\alpha_1=\theta\alpha$.  The map of the $j$-th fold tensor product of $N_{\textrm{ab}}$ by itself induced by $\theta$ is just multiplication by $m^j$ and as this tensor product maps onto $\gamma_j(N)/\gamma_{j+1}(N)$ the same holds true for $\gamma_j(N)/\gamma_{j+1}(N)$. 
This implies that $\beta_{j,\alpha_1}=m^j\beta_{j,\alpha}.$ Hence $\varphi_{j,\alpha_1}=m^{j-1}\varphi_{j,\alpha},$ which means that  $\varphi_{j,\alpha_1}$ is integral for $2\le j\le c$. 

Then Lemma \ref{lem:ref1} and Proposition \ref{prop:ref1} imply that the automorphism of $U_{j-1}/U_{j}\leq B_j$ induced by $\alpha_1=\theta\alpha$ is also integral. 
Let $\overline U_j$ be the image of $U_j$ in $U_{\textrm{ab}}$. Then using the fact that $U_{c}=1$ because $N$ is nilpotent of class $c$ (see Proposition \ref{prop:ref1}) we have a series
$$U_{\textrm{ab}}=\overline U_1\ge \overline U_2\ge\dots \ge \overline U_{c}=0$$
such that the automorphism of each factor induced by conjugating by $\alpha_1$ is integral. Hence the second part of the conclusion of the proposition follows from Lemma \ref{lem:ref1}.
\end{proof}

\begin{proof}[Proof that Theorem A holds in this special case]\ 
 
$S$ is finitely generated. Let $\{\alpha_1,\dots,\alpha_r\}$ be a set of generators. Taking the least common multiple of the integers of Proposition \ref{prop:ref2} for each of the $\alpha_i$'s we get an integer $m\ge1$ such that for each $m$-powering automorphism $\varphi$ of $N$, $\varphi\alpha_i$ and the associated automorphism of $U$ are both integral, for $1\le i\le r$.
 Let $\varphi$ be an $m$-powering automorphism of $N$ and consider the group $W:=\langle U\cup\{\varphi\}\rangle$. 
Set $Q:=SW$ and $P:=SU$.
Observe that $U$ is normal in $W$ and $W/U$ is cyclic and central in $Q/U$. Therefore the group $P$ is normal in $Q$.
We now apply Theorem B with these groups $Q$ and $P$ and with $M:=U$, and $L:=S$. The output is a finitely generated subgroup, which we denote by $Q_1$, of $Q$ such that $Q=Q_1U$, $S\le Q_1$, and $Q_1\cap P$ is nilpotent. Now all subgroups of a nilpotent group are subnormal and so $S$ is subnormal in $Q_1\cap P=Q_1\cap SU$. Therefore $S$ is subnormal in $Q_1$ and it follows that $S\ltimes N$ is subnormal in $Q_1\ltimes N$. By Corollary \ref{subnormals}, we have reduced to showing that $$\hd_k(Q_1\ltimes N)=h(Q_1\ltimes N).$$ The result follows from Proposition \ref{fpinftys} because, as we shall now see, the group $Q_1\ltimes N$ is locally of type $\fpinfty$. 

The group $Q_1$ contains an $m$-powering automorphism $\theta$ (equal to $\varphi$ module $U$). Moreover, $Q_1$ is generated by the set $\{\alpha_1,\dots,\alpha_r,\theta\}\cup Y$ where $Y$ is a finite subset of $U$. So it is also generated by the set 
$X:=\{\theta\alpha_1,\dots,\theta\alpha_r,\theta\}\cup Y$. By Proposition \ref{prop:ref2} both $\theta\alpha_i$ and the automorphism of $U$ that it induces are integral for each $i$.
 Notice that the group $M:=(Q_1\cap U)\ltimes N$ is nilpotent and $Q_1\ltimes N$ is generated by $M\cup X$. Also by Lemma \ref{lem:nuevo4} the automorphisms of $Q_1\cap U$ induced by conjugating by the elements of $X$ are integral, which implies that the same is true of the automorphisms induced on $M$. Furthermore, since $S'\le U$ we have $Q'\le U$ which means that $Q_1/(Q_1\cap U)$ is abelian. Therefore we may apply Theorem C to this setup to deduce that $Q_1\ltimes N$ is locally of type $\fpinfty$.
\end{proof}

\section{Proof of Theorem A}

\smallskip

Let $G$ be a group with $\hd_k(G)<\infty$. By Lemma \ref{bk},  $G$ has finite Hirsch length. Therefore by Lemma \ref{hl} $G$ has a locally finite normal subgroup $T$ such that $G/T$ is torsion free nilpotent-by-free abelian of finite rank-by finite. 
We  know that
\begin{itemize}
\item
$\hd_k(G)=\hd_k(G/T)$ for any locally finite normal subgroup $T$ of $G$, and
\item
$\hd_k(G)=\hd_k(H)$ for any subgroup $H$ of finite index in $G$,
\end{itemize}
(see Proposition \ref{tipsy}).
Obviously, the same happens for the Hirsch length and therefore we can replace $G$ by a section which has a torsion-free nilpotent normal subgroup $E$  such that $A:=G/E$ is free abelian of finite rank. We can embed $G$ into a larger group $\widehat G$ fitting into the  commutative diagram
\[
\xymatrix{
E\ar@{>->}[r]\ar[d]&G\ar@{>>}[r]\ar[d]&A\ar@{=}[d]\\
E^\Q\ar@{>->}[r]&\widehat G\ar@{>>}[r]&A.
}
\]
For the details of this construction see \cite[Proposition 1.1]{Hilton}. 
We then have $\hd_K(G)=\hd_K(\widehat G)$ and  $h(G)=h(\widehat G)$ thus we can replace $G$ by $\widehat G$ and assume that $E$ is already \Maltsev\ complete.

Using Theorem B with $Q:=P:=G$, $M:=E$ and $L:=1$ we may find a finitely generated nilpotent (thus polycyclic) subgroup $H$ of $G$ so that $G=HE$. Consider now the semidirect product $H\ltimes E$. There is a surjective map $H\ltimes E\twoheadrightarrow G=HE$ sending $(h,e)$ to $he$ so an application of  Lemma \ref{LHS} implies that we can reduce the problem to the group $H\ltimes E$. Now, let $C_H(E)$ be the kernel in $H$ of the conjugacy action on $E$. This subgroup is normal in $H\ltimes E$ and the quotient map is $H\ltimes E\twoheadrightarrow H/C_H(E)\ltimes E$. Put $X=H/C_H(E)$. As the group $H$ is polycyclic, so is $C_H(E)$ thus
we have that 
$$\hd_K(H\ltimes E)=\hd_K(X\ltimes E)+\hd_K(C_H(E))$$
which together with the fact that $\hd_K(C_H(E))=h(C_H(E))$ because it is polycyclic implies that we can further reduce the problem to the group $X\ltimes E$. As $X$ acts faithfully on $E$, we may see it as a subgroup of the group $\aut(E)$. 
From this point of view, $[X,X]$ consists exclusively of $1$-powering automorphisms of $E$.

Using Proposition \ref{Maltsevcover}, we choose a free \Maltsev\ complete group 
$\widehat E$ with the same class so that there is a surjective homomorphism $\pi:\widehat E\to E$ that induces an isomorphism $\widehat E/\gamma_2(\widehat E)\cong E/\gamma_2(E)$. There is an induced epimorphism 
$$\bar\pi:\aut(\widehat E;\ker\pi)\twoheadrightarrow\aut(E)$$
whose kernel $Z$ is contained in the subgroup $U(\aut(\widehat E))$ of $1$-powering automorphisms. Note that $Z$ is nilpotent and by Lemma \ref{new3.12} \Maltsev\ complete. 

Let $T$ denote the preimage of $X$ under the map $\bar\pi$. 
From Proposition 3.6(ii) we see that $[T,T]$ consists entirely of $1$-powering automorphisms of $\widehat E$.
At this point we may apply again Theorem B with $T$ playing the role of both $P$ and $Q$, with $M:=Z$ and with $L:=\{1\}$ and we deduce that there is some $S\leq T$ finitely generated and nilpotent such that $SZ=T$. 
We can now consider the  semidirect product $S\ltimes \widehat E$ which collapses naturally via $\pi$ and $\bar\pi$ onto $X\ltimes E$. By Lemma \ref{LHS} it suffices to prove that 
$$\hd_k(S\ltimes \widehat E)=h(S\ltimes \widehat E).$$
We have reduced to the special case that was considered in preceding section and Theorem A follows.

\bibliographystyle{abbrv}

\end{document}